\numberwithin{equation}{section}
\newtheorem{teo}{Theorem}[section]
\newtheorem{pro}[teo]{Proposition}
\theoremstyle{definition}
\newtheorem{lem}[teo]{Lemma}
\newtheorem{ejem}[teo]{Example}
\newtheorem{coro}[teo]{Corollary}
\newcommand{\m}{{}^{-1}}
\title{ \textbf{ Twisted products: Enveloping actions  and equivariant absolute neighborhood extensors}
}
\author{
	Luis Mart\'inez\footnote{This author was supported by CONAHCYT (México)} \\
	\small Departamento de Matem\'{a}ticas, Facultad de Ciencias, Universidad Nacional Aut\'onoma de M\'{e}xico \\
	H\'{e}ctor Pinedo \\
	\small Escuela de Matem\'{a}ticas, Facultad de Ciencias, Universidad Industrial de Santander \\
	\small  e-mail: luchomartinez9816@hotmail.com, hpinedot@uis.edu.co\\
}
\date{\today}
\begin{document}

	\maketitle
	\begin{abstract} 
		The classical notion of twisted product is studied in the context of partial actions, in particular, we show that the globalization of a partial action is a twisted product. In addition, we establish conditions for the metrizability of twisted products, and some homotopy and categorical properties are proved. Furthermore, sufficient conditions for the enveloping space to be an equivariant absolute neighborhood extensor are also studied.
	\end{abstract}
	\noindent
	\textbf{2020 AMS Subject Classification:} 54H15, 55P99, 54C55, 18A40 \\
	\noindent
	\textbf{Key Words:} nice partial action, enveloping space, twisted product, $G$-homotopy, $G$-contractible, locally contractible.
 \section{Introduction} 
 The concept of partial action appeared in the works \cite{R} and \cite{MK} as a useful tool to study   $C^*$-algebras, later
 advances in the study of topological and continuous partial actions permitted subsequent, algebraic,  $*$-algebraic and topological developments, (see for instance  \cite{ RGG, L, PU1} and the references therein),  and  for a detailed account of recent developments around partial actions the interested reader may consult \cite{D} or \cite{RB}. In this work we are  interested in partial actions of  topological groups on topological  spaces, these appear naturally by restricting usual (global) actions to open sets (see \eqref{induced}). This construction of partial actions suggests the natural problem of whether a given partial action of a group $G$ on a space $X,$ it can be obtained as a  restriction of a global action on some superspace, such an action is the so-called enveloping action or globalization, provided that certain minimality condition is satisfied which guarantees its uniqueness.  The answer to this problem turned out to be affirmative, as shown in \cite{AB} and independently in \cite{KL}, nevertheless topological properties of the initial space are not necessarily inherited by the  space endowed with a global action. For instance, by \cite[Example 1.4]{AB} there is a partial group action on a Hausdorff space whose (minimal) globalization acts on a non-Hausdorff space. In contrast, in \cite[Proposition 1.2]{AB} and \cite[Proposition 2.1]{AB} a criterion was given for the preservation  of the Hausdorff property under globalization. Further, criteria for the preservation of other structural properties such as Polishness were presented in\cite{PU1}, while, regularity, metrizability, the second countable property, having invariant
metric and to be profinite were presented in the works \cite{MPV1} and \cite{MPV2}, respectively.\noindent

On the other hand, the equivariant theory of retracts (see \cite{A2} for a detailed introduction to this subject) appears as a part of the theory of topological transformation groups. In this theory, $G$-spaces are studied from the point of view of the theory of retracts  \cite{SH}. The notions of equivariant absolute (neighborhood) retracts and equivariant absolute (neighborhood) extensors constitute important objects of this theory. Topics related to characterization of equivariant absolute neighborhood retracts  are a very active area  in equivariant infinite-dimensional topology, for instance in  the work \cite{A},  the author presents some open problems related to the topology of the Banach-Mazur compacta, actions on hyperspaces, and invariant metric problems. When $G$ is a compact Lie group, there are characterizations of equivariant absolute (neighborhood) extensors in terms of fixed points (see \cite{JJ} and \cite{JJ2}). The importance of $G$ being a compact Lie group relies on the Slice Theorem (see \cite[Chapter II, Theorem 5.4]{GB}). The fundamental notion of a slice connects us directly with that of a twisted product (see \cite[Chapter II, Theorem 4.2]{GB}), and also with the topics that we will develop in this work.

  This article is structured as follows. After the introduction in the first part of  Section \ref{pre} we provide the necessary notations, notions, and results on partial actions which will be useful throughout the text, especially we  recall the construction of categories $\mathcal{PA}_G$  and $\mathcal{A}_G$ of continuous partial and global actions of a topological group $G$ on topological spaces, respectively. A useful construction in $\mathcal{PA}_G$ are finite products, these  are described in \eqref{tre} and \eqref{treta},  moreover  in Proposition \ref{glof} we recall that there exists a functor, called the enveloping functor, $E: \mathcal{PA}_G \rightarrow \mathcal{A}_G$  sending a $G$-space $(X,\theta),$ to its enveloping space $(X_G, \mu^\theta)$ which turns out to be a left adjoint to the inclusion functor   $\mathcal{A}_G \rightarrow \mathcal{PA}_G.$  Next,  in Section \ref{tp} we explore the classical notion of twisted products (see \cite{GB} and \cite{TD}) in the context of partial action. In Proposition \ref{globprod} is presented their relation with orbit equivalence spaces, moreover structural properties of twisted products and enveloping spaces are presented in Proposition \ref{iota2} and Proposition \ref{iota3},  and in Theorem \ref{glof2}   we construct a pair of adjoint functors between the categories  $\mathcal{A}_G $ and $ \mathcal{PA}_K,$ where $K$ is a subgroup of $G,$ this result recovers   Proposition \ref{glof} in the case $K=G,$ we finish Section \ref{tp} with Theorem   \ref{mett}, where we provide a necessary condition for the metrizability of twisted products. Later, the beginning of Section \ref{gc} is devoted to presenting the required concepts around ANE,  $G$-ANE  spaces, contractible and locally contractible spaces, and relations among them. Further, we show that 
  relevant structural properties are preserved by the enveloping functor $E,$ for instance in Proposition \ref{lcn} we prove that  properties such as locally contractible, LC$^n$ are preserved by $E,$ while Proposition \ref{Ghomot} shows that this functor also preserves homotopies. Then, we turn our attention to  the notions of $G$-contractibility and local $G$-contractibility and show in Theorem \ref{G-con} that the $G$-contractibility  is also preserved by $E,$ further under the assumption that $G$ is compact, one shows in Theorem \ref{locallyc} that $X$ is locally $G$-contractible if and only if $X_G$ is.   Finally in Section \ref{gane}, we work mainly with partial actions on metrizable spaces such that their enveloping spaces are also metrizable, under these assumptions, we show in Theorem \ref{comp}  that for $G$ a  compact Lie group, the space $X$ is locally contractible, if and only if, its enveloping space is a $G$-ANE.  At this point, it is important to observe that the theory of compact Lie transformation groups is a large and well-researched area of mathematics, in this sense we present in Corollary   \ref{cl} that conditions of being an ANE for $X$ and $X_G$ are equivalent provided that $X$ is endowed with a free partial action of a compact Lie group. We finish the work with Theorem \ref{automatic} where we provide a sufficient condition for the space $X_G$ to be a $G$-ANE, where under the assumption that the space $X$ is endowed with a partial action with clopen domains of a finite group $G.$ 
  Several examples are presented throughout the work to illustrate the results.\\
  {\bf Notations:} Given a category $\mathcal C,$ its class of objects is denoted by $\mathcal C_0$, and given $X,Y\in \mathcal C_0,$ the set of morphisms from $X$ to $Y$ is denoted by $\mathcal C(X,Y).$ All functors are covariant. 
  \section{Partial actions and globalization}\label{pre}
In this section, we present some notions and results that will be helpful throughout the work.
Let $G$ be a group with identity element $e$ and $X$ be a set. Following \cite{KL} a   partially defined  function $\theta: G*X\subseteq G\times X\to  X$, $(g,x)\mapsto g\cdot x$,  is a   (set theoretic)  {\it partial action} of $G$ on $X$ if for each $g,h\in G$ and $x\in X$ the following assertions hold:
	\begin{enumerate}
		\item [(PA1)] If $\exists\ g\cdot x$, then $\exists\ g^{-1}\cdot (g\cdot x)$ and $g^{-1}\cdot (g\cdot x)=x$,
		\item [(PA2)] If $\exists\ g\cdot(h\cdot x)$, then $\exists\ (gh)\cdot x$ and $g\cdot(h\cdot x)=(gh)\cdot x$,
		\item [(PA3)] $\exists\ e\cdot x$ and $e\cdot x=x,$
	\end{enumerate}
where $\exists\ g\cdot x$ means that the pair $(g,x)$ belongs to $G*X.$ We say that $\theta$ is a set theoretic {\it global} action if $\exists\ g\cdot x,$ for all $(g,x)\in G\times X.$ For each $g\in G$ consider $X_g=\{x\in X: \exists\ g\m\cdot x\}$. Then $\theta$ induces a family of bijections $\{\theta_g\colon X_{g\m}\ni x\mapsto g\cdot x\in X_g \}_{g\in G}.$   We also denote this family by $\theta.$ Notice that $G*X=\{(g,x)\in G\times X\mid x\in X_{g\m}\},$ moreover $\theta$  acts globally on $X$ if and only if $X_g=X,$ for any $g\in G,$ or equivalently $G*X=G\times X.$ 

Let $G$ be a Hausdorff topological group, $X$ be a topological space, and $\theta: G*X\rightarrow X$ be a set-theoretic partial action of $G$ on the underlying set $X$, endow $G\times X$ with the product topology and $G*X$  with the topology of subspace. Then $\theta$ is called:
\begin{enumerate}
    \item [(a)] a {\it topological partial action} provided that $X_g$ is open in $X$ and $\theta_g: X_{g\m}\rightarrow X_g$ is homeomorphism, for each $g\in G$;
    \item [(b)] a {\it continuous partial action} if it is continuous;
    \item [(c)] a {\it nice partial action} provided that $\theta$ is continuous and $G*X$ is open in $G\times X$.
\end{enumerate}

\vspace{0.2cm}

Let $\mu \colon G\times Z\to Z$ be a continuous global action of $G$ on a  topological space $Z$ and let $X$ be a nonempty open subset of $Z$.  A nice partial action of $G$ on $X$ is obtained by setting:
\begin{equation}\label{induced}X_g=X\cap \mu_g(X),\,\,\, \,\, \theta_g=\mu_g\restriction X_{g\m}\,\,\,  \text{ and }\, \,\,\,\theta \colon G* X\ni (g,x)\mapsto \theta_g(x)\in X .\end{equation}  We say that   $\theta$   is  the {\it restriction of $\mu$ to X} or that $\theta$ is {\it induced} by $\mu$.

A classical problem in the study of partial actions is whether they can be restrictions of global actions. In the topological sense, this turns out to be affirmative and a proof was given in \cite[Theorem 1.1]{AB} and independently in \cite[Section 3.1]{KL}.  For the reader's convenience, we recall their construction. 

Let $\theta$ be a topological partial action of $G$ on a topological space $X$ and consider the following equivalence relation on $ G\times X$:
\begin{equation}
\label{eqgl}
(g,x)R(h,y) \Longleftrightarrow x\in X_{g\m h}\,\,\,\, \text{and}\, \,\,\, \theta_{h\m g}(x)=y.
\end{equation}
 The equivalence class of  $(g,x)$ is denoted by  $[g,x]$. The {\it enveloping space}  or the {\it globalization}  of $X$ is the set  $X_G=(G\times X)/R$  endowed with the quotient topology.  By  \cite[(iii) Proposition 3.9]{KL}  the map
\begin{equation}
\label{action}
\mu^\theta \colon G\times X_G\ni (g,[h,x])\to [gh,x]\in X_G,
\end{equation}
is a continuous action of $G$ on $X_G$, which is called the {\it enveloping action} of $\theta.$ 
The quotient map
\begin{equation} \label{qo} p^\theta: G\times X\ni (g,x)\mapsto [g,x]\in X_G,
\end{equation}
is open by \cite[(ii) Proposition 3.9]{KL}.  On the other hand, by \cite[(i) Proposition 3.9]{KL} the map  
\begin{equation}
\label{iota}
\iota^\theta \colon X\ni x\mapsto [e,x]\in X_G
\end{equation} 
 is continuous, injective and $G\cdot \iota(X)=X_G$. When there is no chance for confusion, we will drop $\theta$ in (\ref{action}), (\ref{qo}) and (\ref{iota}).

 The following result was shown in \cite[Proposition 3.11]{KL} and \cite[Proposition 3.12]{KL}.
\begin{pro}\label{Kellen1}
    \rm{Let $\theta$ be a topological partial action of $G$ on a topological space $X$. Then the following assertions hold:
    \begin{enumerate}
        \item [(i)] $\iota(X)$ is open in $X_G$ if and only if $G*X$ is open in $G\times X$;
        \item [(ii)] $\iota: X\rightarrow \iota(X)$ is an open map if and only if $\theta$ is continuous;
        \item [(iii)] $\iota: X\rightarrow X_G$ is an open embedding if and only if $\theta$ is nice.
    \end{enumerate}}
\end{pro}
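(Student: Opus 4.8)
The three statements share a common computation, so I would begin by establishing the key identity relating the $p$-preimages of $\iota$-images to the graph of $\theta$. Namely, for an arbitrary subset $U\subseteq X$ I claim that
\begin{equation}\label{keyid}
p\m(\iota(U))=\{(g,y)\in G*X : \theta_g(y)\in U\}.
\end{equation}
This is a direct unwinding of the definitions: since $\iota(u)=[e,u]=p(e,u)$, the set $p\m(\iota(U))$ is the $R$-saturation of $\{e\}\times U$, and $(g,y)R(e,u)$ for some $u\in U$ holds precisely when $y\in X_{g\m}$ (so that $(g,y)\in G*X$) and $\theta_g(y)=u\in U$. In particular, taking $U=X$ gives $p\m(\iota(X))=G*X$, while for general $U$ the right-hand side of \eqref{keyid} is exactly $\theta\m(U)$, where $\theta\colon G*X\to X$ is the partially defined action map. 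With \eqref{keyid} in hand, part (i) is immediate: since $p$ is a quotient map, $\iota(X)$ is open in $X_G$ if and only if $p\m(\iota(X))=G*X$ is open in $G\times X$.

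For part (ii) the delicate point — and the step I expect to be the main obstacle — is that openness of $\iota$ must be measured in the subspace topology of $\iota(X)$, not in $X_G$; testing $\iota(U)$ against open sets of $X_G$ directly would conflate (ii) with the openness condition of (i). The tool I would use is the elementary fact that the restriction of an open quotient map to a saturated subset is again an open quotient map. Concretely, $p$ is open (hence a quotient map), and $G*X=p\m(\iota(X))$ is $R$-saturated, so the corestriction $q\colon G*X\to \iota(X)$, $q=p\restriction G*X$, is an open quotient map. Consequently, for open $U\subseteq X$, the set $\iota(U)$ is open in $\iota(X)$ if and only if $q\m(\iota(U))=p\m(\iota(U))$ is open in $G*X$, which by \eqref{keyid} holds if and only if $\theta\m(U)$ is open in $G*X$.

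Now I would conclude (ii) by comparing the two definitions against this equivalence. Since $\iota$ is a continuous injection with image $\iota(X)$, the map $\iota\colon X\to\iota(X)$ is open exactly when $\iota(U)$ is open in $\iota(X)$ for every open $U\subseteq X$; and $\theta$ is continuous exactly when $\theta\m(U)$ is open in $G*X$ for every open $U\subseteq X$. The displayed equivalence matches these two conditions term by term, proving both implications of (ii) simultaneously.

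Finally, part (iii) is a formal consequence of (i) and (ii). Recall that $\iota$ is a continuous bijection onto $\iota(X)$, hence a homeomorphism onto its image precisely when it is open onto $\iota(X)$; thus $\iota\colon X\to X_G$ is an open embedding if and only if $\iota$ is open onto $\iota(X)$ and $\iota(X)$ is open in $X_G$. By (ii) the first condition is the continuity of $\theta$, and by (i) the second is the openness of $G*X$ in $G\times X$, and together these are exactly the definition of $\theta$ being nice. I would close by recording that the only external input required is the openness of $p$, already recalled from the construction; everything else reduces to the identity \eqref{keyid} and the quotient-restriction lemma.
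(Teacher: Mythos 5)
Your proof is correct. The paper itself does not prove this proposition — it is recalled from Kellendonk--Lawson \cite{KL} — but your argument is exactly the standard one and coincides with the technique the authors themselves deploy for the twisted-product analogue in Proposition \ref{iota2}: the saturation identity $p^{-1}(\iota(U))=\theta^{-1}(U)$ (there, $p_K^{-1}(\iota_K(X))=K*X$ in \eqref{preimagen}) together with the fact that the restriction of the open quotient map $p$ to the saturated set $G*X$ is again an open quotient map onto $\iota(X)$. All three parts, including the delicate subspace-topology point in (ii), are handled correctly.
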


Throughout this paper all partial actions will be topological and $G$ will denote a Hausdorff topological group. By a $G$-space we mean a pair $(X,\theta)$, where $X$ is a topological space and $\theta$ is a continuous partial action of $G$ on $X$. The partial action $\theta$ is called \textit{free} provided that $\theta(g,x)\neq x$ for each $(g,x)\in G*X$ such that $g\neq e$.

Let $(X,\theta)$ and $(Y,\eta)$ be $G$-spaces. A continuous map $f: X\rightarrow Y$ is called a {\it $G$-map} when $(g,f(x))\in G*Y$ and $\eta(g,f(x))=f(\theta(g,x))$, for each $(g,x)\in G*X$. Moreover, $f$ is called a {\it $G$-homeomorphism} provided that $f$ is a homeomorphism and $f^{-1}$ is a $G$-map, in that case  the spaces $X$ and $Y$ are called $G$-homeomorphic. Generally, a  $G$-map which is a homeomorphism may fail
to be a $G$-homeomorphism. A $G$-map $f$ is called $G$-{\it isovariant} provided that $G_x=G_{f(x)}$ for each $x\in X$, where $G_x=\{g\in G_x: \theta(g,x)=x\}$ is the {\it isotropy} group of $x$.  Observe that $G_x\subseteq G_{f(x)}$ for each $x\in X$.

We denote by $\mathcal{PA}_G$ the category whose objects are $G$-spaces and whose morphisms are $G$-maps. The composition of morphisms is the composition of functions. Also, we denote by $\mathcal{A}_G$ the full subcategory of $\mathcal{PA}_G$ whose objects are $G$-spaces equipped with global actions of $G$.

Let $J$ be a finite set and $\{(X_j,\theta^j)\}_{j\in J}$ be a collection of $G$-spaces. Put $X=\prod\limits_{j\in J}X_j$ with the product topology and set
	\begin{equation}\label{tre}
	    G*X=\{(g,(x_j)_{j\in J})\in G\times X: (g,x_j)\in G*X_j,\  \forall j\in J \}.
	\end{equation}
	We define the diagonal partial action:
	\begin{equation}\label{treta}
		\Delta(\theta):G*X\ni  (g,x)\mapsto (\theta^j(g,x_j))_{j\in J} \in  X,
	\end{equation}
	where $x=(x_j)_{j\in J}\in X$. Then  $\Delta(\theta)$ is a continuous partial action of $G$ on $X$, and $G*X$ is open (closed) in $G\times X$ provided that $G*X_j$ is open (closed) in $G\times X_j$, for each $j\in J$. Moreover, for any $j\in J$ the projection $\rho_j: X\rightarrow X_j$ is a $G$-map and the triple $(X,\Delta(\theta),\{\rho_j\}_{j\in J})$ satisfies the universal property of the product in $\mathcal{PA}_G$ (see \cite[Lemma 2.7]{MPV2}).

Now we recall the next result. 
 \begin{pro}\label{isog}{\rm \cite[(ii) Proposition 2.11]{MPV1} Let $\beta$ be a continuous global action of $G$ on a space $Y$, and let $\theta$ be the  partial action obtained by restricting $\beta$ to an open subset $X \subseteq Y.$  If $Y = \bigcup_{g\in G}\beta_g(X)$, then $Y$  and $X_G$ are $G$-homeomorphic.}
\end{pro}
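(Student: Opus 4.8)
The plan is to exhibit an explicit $G$-homeomorphism $\bar\psi\colon X_G\to Y$ induced by the action map $\psi\colon G\times X\ni(g,x)\mapsto \beta_g(x)\in Y$, which is continuous because $\beta$ is. First I would check that $\psi$ is constant on the $R$-classes. If $(g,x)R(h,y)$, then by \eqref{eqgl} we have $x\in X_{g\m h}$ and $y=\theta_{h\m g}(x)$; since $\theta$ is the restriction of $\beta$, it follows that $\theta_{h\m g}(x)=\beta_{h\m g}(x)$, whence $\beta_h(y)=\beta_h\beta_{h\m g}(x)=\beta_g(x)$, i.e. $\psi(h,y)=\psi(g,x)$. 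Thus $\psi$ descends to a continuous map $\bar\psi([g,x])=\beta_g(x)$ by the universal property of the quotient topology, since $p^\theta$ of \eqref{qo} is a quotient map. Surjectivity of $\bar\psi$ is immediate from the hypothesis $Y=\bigcup_{g\in G}\beta_g(X)$: any $y=\beta_g(x)$ equals $\bar\psi([g,x])$.

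For injectivity, suppose $\bar\psi([g,x])=\bar\psi([h,y])$, that is $\beta_g(x)=\beta_h(y)$ with $x,y\in X$. Then $x=\beta_{g\m h}(y)\in\beta_{g\m h}(X)$, and since also $x\in X$ we obtain $x\in X\cap\beta_{g\m h}(X)=X_{g\m h}$ by \eqref{induced}. Moreover $\theta_{h\m g}(x)=\beta_{h\m g}(x)=\beta_{h\m}\beta_g(x)=\beta_{h\m}\beta_h(y)=y$, so $(g,x)R(h,y)$ and therefore $[g,x]=[h,y]$. Hence $\bar\psi$ is a continuous bijection.

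It remains to see that $\bar\psi$ is open and equivariant, and I expect the openness to be the main point. The key observation is that the global action map $\beta\colon G\times Y\to Y$ is open, since it factors as $\mathrm{pr}_Y\circ\Phi$, where $\Phi(g,y)=(g,\beta_g(y))$ is a self-homeomorphism of $G\times Y$ (with inverse $(g,z)\mapsto(g,\beta_{g\m}(z))$) and $\mathrm{pr}_Y$ is the open projection. As $X$ is open in $Y$, the set $G\times X$ is open in $G\times Y$, so $\psi=\beta\restriction(G\times X)$ is again open; and since $\bar\psi(U)=\psi((p^\theta)^{-1}(U))$ for every open $U\subseteq X_G$, the map $\bar\psi$ is open as well, hence a homeomorphism. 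Finally, $\bar\psi(\mu^\theta(g,[h,x]))=\bar\psi([gh,x])=\beta_{gh}(x)=\beta_g(\bar\psi([h,x]))$ shows that $\bar\psi$ intertwines $\mu^\theta$ and $\beta$; because both are global actions, a $G$-equivariant homeomorphism automatically has a $G$-equivariant inverse, so $\bar\psi$ is the desired $G$-homeomorphism. Alternatively, openness could be deduced by noting that $\bar\psi$ restricts to the open embedding $X\hookrightarrow Y$ on $\iota(X)$ and, by equivariance, to a homeomorphism $g\cdot\iota(X)\to\beta_g(X)$ on each translate, which makes it a bijective local homeomorphism.
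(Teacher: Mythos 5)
Your argument is correct and complete: the induced map $[g,x]\mapsto\beta_g(x)$ is well defined, bijective, continuous, open and equivariant, and each of these checks is carried out properly. The paper itself gives no proof of this proposition --- it is quoted from \cite[(ii) Proposition 2.11]{MPV1} --- but your construction is exactly the canonical one used for this result (the action map $G\times X\to Y$ descending through the open quotient $p^\theta$), so there is nothing to add.
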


The construction of the enveloping space induces a functor $E: \mathcal{PA}_G\rightarrow \mathcal{A}_G$ called the  {\it globalization} (or {\it enveloping functor}),  defined as follows. Given $(X,\theta)$ in $(\mathcal{PA}_G)_0$, one sets  $E(X,\theta)=(X_G,\mu)$, where $\mu$ is given by (\ref{action}).  Now, for $(X,\theta)$ and $(Y,\eta)$ in $(\mathcal{PA}_G)_0$, and a $G$-map $f:X\rightarrow Y$, the functor $E$ sends $f$ to $E(f): X_G\ni [g,x]\mapsto [g,f(x)]\in Y_G$. Since $E$ preserves isomorphisms, then $(X_G,\mu^\theta)$ and $(Y_G,\mu^\eta)$ are $G$-homeomorphic whenever $(X,\theta)$ and $(Y,\eta)$ are $G$-homeomorphic.

We proceed with the next.

\begin{pro}\label{glof} {\rm
   The functor $E:\mathcal{PA}_G\rightarrow \mathcal{A}_G$ is left adjoint to the inclusion functor  $j:\mathcal{A}_G\rightarrow \mathcal{PA}_G,$ that is $\mathcal{A}_G$  is a reflective subcategory of $\mathcal{PA}_G$.   } 
 \end{pro}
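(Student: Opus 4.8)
The plan is to exhibit, for each object $(X,\theta)$ of $\mathcal{PA}_G$, a universal arrow from $(X,\theta)$ to the inclusion functor $j$, taking the map $\iota^\theta$ of \eqref{iota} as the unit of the adjunction. Concretely, I will show that for every global $G$-space $(Z,\mu)$ in $(\mathcal{A}_G)_0$ and every $G$-map $f\colon X\to Z$ there is a unique $G$-map $\bar f\colon X_G\to Z$ in $\mathcal{A}_G$ with $\bar f\circ \iota^\theta=f$. This is precisely the statement that $\iota^\theta$ is universal, hence that $E\dashv j$ and that $\mathcal{A}_G$ is a reflective subcategory of $\mathcal{PA}_G$.

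First I would record the identity $[g,x]=[e,\theta_g(x)]$, valid whenever $x\in X_{g^{-1}}$, which follows immediately from the definition \eqref{eqgl} of $R$. Using it together with \eqref{action} one checks that $\iota^\theta$ is itself a $G$-map: the membership requirement $(g,\iota^\theta(x))\in G*X_G$ is automatic because the action on $X_G$ is global, and equivariance reduces to $\mu^\theta(g,[e,x])=[g,x]=[e,\theta_g(x)]=\iota^\theta(\theta(g,x))$ for $(g,x)\in G*X$.

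Next, given $f$ as above I would define $\bar f\colon X_G\to Z$ by $\bar f([g,x])=\mu(g,f(x))=g\cdot f(x)$. The central point is to verify that this is well defined: if $[g,x]=[h,y]$, then $x\in X_{g^{-1}h}$ and $y=\theta_{h^{-1}g}(x)$, so $(h^{-1}g,x)\in G*X$ and applying the $G$-map property of $f$ yields $f(y)=(h^{-1}g)\cdot f(x)$, whence $h\cdot f(y)=g\cdot f(x)$. Continuity of $\bar f$ then follows because $\bar f\circ p^\theta$ is the composition $(g,x)\mapsto (g,f(x))\mapsto g\cdot f(x)$ of continuous maps while $p^\theta$ is a quotient map (indeed open, by \eqref{qo}); equivariance, $\bar f(\mu^\theta(g',[g,x]))=g'\cdot(g\cdot f(x))=\mu(g',\bar f([g,x]))$, is immediate; and $\bar f\circ\iota^\theta=f$ holds since $\bar f([e,x])=e\cdot f(x)=f(x)$.

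Finally, uniqueness is forced by the condition $G\cdot\iota^\theta(X)=X_G$ from \eqref{iota}: any $G$-map $h$ with $h\circ\iota^\theta=f$ must satisfy $h([g,x])=h(\mu^\theta(g,[e,x]))=g\cdot h(\iota^\theta(x))=g\cdot f(x)=\bar f([g,x])$, so $h=\bar f$ on all of $X_G$. I expect the only delicate step to be the well-definedness of $\bar f$, where the partiality of $\theta$ enters through the domain condition $x\in X_{g^{-1}h}$ that makes the defining relation of a $G$-map applicable to $f$; the remaining verifications are a routine diagram chase.
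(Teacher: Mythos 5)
Your proposal is correct, and it is essentially the argument the paper uses: Proposition \ref{glof} is established in the paper as the case $K=G$ of Theorem \ref{glof2}, whose proof rests on exactly the data you construct, namely the unit $\iota^\theta$ and the extension $[g,x]\mapsto \mu(g,f(x))$ (the paper's $\tau_{(X,Y)}(f)$), with the same well-definedness check via the relation \eqref{eqgl} and the same continuity argument via openness of $p^\theta$. The only organizational difference is that you package the adjunction as a universal arrow with a uniqueness clause (which uses $G\cdot\iota^\theta(X)=X_G$), whereas the paper exhibits the natural bijection $\lambda_{(X,Y)}\colon f\mapsto f\circ\iota_K$ and verifies naturality in both variables explicitly; the two formulations are equivalent and the formulas coincide.
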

 
 \section{Twisted products}\label{tp}
 In this section, we deal with twisted products and use them to obtain structural properties and  relations between objects in  categories related to partial actions.
\subsection{Construction of twisted products}
Let $(X,\theta)$ be a $G$-space and set $G^x=\{g\in G: (g,x)\in  G*X\}$. Consider the  {\it orbit equivalence relation} $\sim_G$ on $X$ given by:
\begin{equation}\label{porbit}x\sim_G y\Longleftrightarrow\,\, \exists g\in G  \,\,{\rm such \,\,that}\,\,g\in G^x\,{\rm and}\, g\cdot x=y,\end{equation}
for each $x,y\in X.$  The orbit space of $X$, denoted by $X/G$, is the set of equivalence classes endowed with the quotient topology. Its elements are the {\it orbits} $[x]=G^x\cdot x$, for each  $x\in X$. It follows by  \cite[Lemma 3.2]{PU1} that the projection map
\begin{equation}\label{qmap}\pi_G: X\ni x\mapsto [x] \in X/G,\end{equation} is continuous and open.

 

Let $G$ be a topological group, $K$ be a subgroup of $G$ and $(X,\theta)$ be a $K$-space. Consider the global action $K\times G\ni (k,g)\mapsto gk\m\in G$ of $K$ on $G$, and let $G\times _K X= (G\times X)/K$ be  the corresponding  orbit space, where $G\times X$ is equipped with the diagonal partial action 
$$K* (G\times X)\ni(k,(g,x))\mapsto (gk\m,\theta(g,x))\in G\times X,$$ (see (\ref{treta})). Therefore 
$G\times _K X=\{[g,x]_K\mid (g,x)\in G\times X\},$ where \begin{equation}\label{clase}
[g,x]_K=\{(gk\m,\theta(k,x)): k\in K^x\},
\end{equation}  The $K$-space $G\times _K X$ is called the {\it twisted product} of the $K$-spaces $G$ and $X.$

We proceed with the following proposition.

\begin{pro}\label{globprod}\rm{\cite[Theorem 3.3]{PU1}
    Let $(X,\theta)$ be a $G$-space. Then $[g,x]=[g,x]_G$, for each $(g,x)\in G\times X$, and consequently $G\times_G X=X_G,$ moreover if 
    
\begin{equation}\label{qo2}
    p^\theta_K: G\times X\ni(g,x)\mapsto [g,x]_K\in G\times_KX, 
\end{equation}     we have that
    $p^\theta=p^\theta_G,$ where $p^\theta$ is defined in (\ref{qo}).}
\end{pro}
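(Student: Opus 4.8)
The plan is to reduce everything to the single set-theoretic identity $[g,x]=[g,x]_G$, since the remaining two assertions follow at once. Indeed, the $R$-classes $[g,x]$ are precisely the fibers of $p^\theta$ from \eqref{qo}, while the classes $[g,x]_G$ are the fibers of $p^\theta_G$ from \eqref{qo2}; if these classes coincide for every $(g,x)\in G\times X$, then the two equivalence relations on $G\times X$ are literally the same, so the quotient sets agree and, carrying the quotient topology in both cases, one obtains $X_G=G\times_G X$ as topological spaces. Simultaneously $p^\theta(g,x)=[g,x]=[g,x]_G=p^\theta_G(g,x)$ for all $(g,x)$, giving $p^\theta=p^\theta_G$.

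First I would unwind the definition of the $R$-class from \eqref{eqgl}. A pair $(h,y)$ lies in $[g,x]$ exactly when $x\in X_{g\m h}$ and $y=\theta_{h\m g}(x)$. Using that $X_s=\{x\in X:\exists\, s\m\cdot x\}$, the membership $x\in X_{g\m h}$ is equivalent to $(h\m g,x)\in G*X$. Thus
\[
[g,x]=\{(h,\theta_{h\m g}(x)):h\in G,\ (h\m g,x)\in G*X\}.
\]
Next I would perform the change of variables $k=h\m g$, equivalently $h=gk\m$, which is a bijection of $G$ onto itself. Under this substitution the condition $(h\m g,x)\in G*X$ becomes $k\in G^x$, the point $\theta_{h\m g}(x)$ becomes $\theta(k,x)$, and the first coordinate $h$ becomes $gk\m$. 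Hence
\[
[g,x]=\{(gk\m,\theta(k,x)):k\in G^x\},
\]
which is exactly the right-hand side of \eqref{clase} with $K=G$, that is $[g,x]_G$. This establishes the identity and, as explained above, the full statement.

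The step demanding care---though it carries no deep content---is the bookkeeping of the domains of definition: one must verify that the existence clause $x\in X_{g\m h}$ in \eqref{eqgl} matches precisely the clause $k\in G^x$ in \eqref{clase} under the reparametrization, so that no pair is gained or lost, and that the map $h\mapsto g h\m$ indeed restricts to a bijection between the two index sets. Once the index sets are seen to correspond bijectively and the coordinates transform as above, the two descriptions of the class coincide verbatim, and the reflexivity, symmetry and transitivity of $R$ guarantee that these common classes are genuinely the orbits of the $G$-action defining $G\times_G X$, so nothing further needs to be checked.
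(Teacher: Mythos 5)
The paper states this proposition as a recalled result, citing \cite[Theorem 3.3]{PU1}, and gives no proof of its own, so there is nothing internal to compare against. Your direct verification is correct: the change of variables $k=h\m g$ does carry the defining clause $x\in X_{g\m h}$ of \eqref{eqgl} exactly onto the clause $k\in G^x$ of \eqref{clase}, so the $R$-class of $(g,x)$ coincides with its orbit under the diagonal $G$-action, and the equality of quotient sets, quotient topologies, and projection maps follows immediately.
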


The action (\ref{action}) is extended to $G\times_KX$ as follows. 

\begin{pro}\label{tpro}
 \rm{Let $G$ be a topological group, $K$ be a subgroup of $G$ and $(X,\theta)$ be a $K$-space. Then the map:
 \begin{equation}\label{action2}
     \mu^\theta_K: G\times (G\times_KX)\ni(g',[g,x]_K)\mapsto [g'g,x]_K\in G\times_KX, 
 \end{equation}
 is a continuous action of $G$ on $G\times_KX$. Moreover, $\mu^\theta_G=\mu^\theta$, where $\mu^\theta$ is defined in (\ref{action}). }
\end{pro}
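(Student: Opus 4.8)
The plan is to verify, in order, that $\mu^\theta_K$ is well defined on classes, that it satisfies the axioms of a global action, that it is continuous, and finally to read off the identity $\mu^\theta_G=\mu^\theta$ from Proposition \ref{globprod}. For well-definedness I would start from two representatives of the same class: if $[g_1,x_1]_K=[g_2,x_2]_K$, then by \eqref{clase} there is $k\in K^{x_1}$ with $g_2=g_1k\m$ and $x_2=\theta(k,x_1)$. Multiplying the first coordinate on the left by $g'$ gives $g'g_2=(g'g_1)k\m$ together with $x_2=\theta(k,x_1)$, which says exactly that $(g'g_2,x_2)$ is the image of $(g'g_1,x_1)$ under the \emph{same} element $k\in K^{x_1}$ of the diagonal $K$-action; hence $[g'g_1,x_1]_K=[g'g_2,x_2]_K$ and the formula is independent of the representative. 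The action axioms are then immediate from associativity of multiplication in $G$: one has $\mu^\theta_K(e,[g,x]_K)=[g,x]_K$ and $\mu^\theta_K(g'',\mu^\theta_K(g',[g,x]_K))=[g''g'g,x]_K=\mu^\theta_K(g''g',[g,x]_K)$.

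The continuity is the step I expect to carry the real content. The idea is to descend continuity through the quotient map $p^\theta_K$ of \eqref{qo2}, which is precisely the orbit map of the diagonal partial $K$-action on $G\times X$; by \cite[Lemma 3.2]{PU1} (the same result quoted for \eqref{qmap}), applied to the group $K$ acting on the $K$-space $G\times X$, this orbit map is continuous and open. I would then introduce the evidently continuous map
\[
\tilde\mu\colon G\times(G\times X)\ni (g',(g,x))\mapsto (g'g,x)\in G\times X,
\]
and note that the square
\[
\mu^\theta_K\circ(\mathrm{id}_G\times p^\theta_K)=p^\theta_K\circ\tilde\mu
\]
commutes, since both composites send $(g',(g,x))$ to $[g'g,x]_K$. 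Because $p^\theta_K$ is open and surjective and $\mathrm{id}_G$ is open and surjective, the product map $\mathrm{id}_G\times p^\theta_K$ is open and surjective, hence a quotient map. As $p^\theta_K\circ\tilde\mu$ is continuous, the universal property of the quotient map $\mathrm{id}_G\times p^\theta_K$ forces $\mu^\theta_K$ to be continuous.

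It remains to treat $K=G$, which is immediate: by Proposition \ref{globprod} we have $G\times_G X=X_G$ and $[g,x]_G=[g,x]$, so $\mu^\theta_G(g',[g,x]_G)=[g'g,x]_G=[g'g,x]=\mu^\theta(g',[g,x])$ by \eqref{action}, i.e. $\mu^\theta_G=\mu^\theta$. The principal obstacle is exactly the continuity argument: one cannot simply invoke that $p^\theta_K$ is a quotient map, since the product of a quotient map with the identity need not be a quotient map in general. What rescues the argument is the stronger fact that the orbit map of a partial action is \emph{open}; openness is preserved under taking products with the identity, so $\mathrm{id}_G\times p^\theta_K$ is open and therefore automatically a quotient map, which is what lets the continuity of $p^\theta_K\circ\tilde\mu$ pass down to $\mu^\theta_K$.
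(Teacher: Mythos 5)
Your proposal is correct and follows essentially the same route as the paper: the same representative computation via \eqref{clase} for well-definedness, and the same commutative square through $\mathrm{id}_G\times p^\theta_K$ for continuity, where you merely make explicit the point the paper leaves implicit (that openness of $p^\theta_K$ is what makes $\mathrm{id}_G\times p^\theta_K$ a quotient map). The extra verifications of the action axioms and of $\mu^\theta_G=\mu^\theta$ are routine additions the paper omits.
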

\begin{proof}
    First, we shall prove that $\mu^\theta_K$ is well defined. Take $g'\in G$, $[g,x]_K\in G\times_KX$ and $(h,z)\in[g,x]_K$. By (\ref{clase}) there exists $k\in G^x$ such that $(h,z)=(gk\m,\theta(k,x))$. But $[g'gk\m,\theta(k,x)]_K=[g'g,x]_K$ because

    \begin{center}
        $k\m\cdot (g'gk\m,\theta(k,x))=(g'gk\m k,\theta(k\m,\theta(k,x)))\stackrel{\rm{(PA1)}}=(g'g,x)$,
    \end{center}
    then $\mu^\theta_K$ is well defined.  We shall now prove that $\mu^\theta_K$ is continuous. For this,  consider the map $h: G\times (G\times X)\ni(g',(g,x))\mapsto (g'g,x)\in G\times X$, and observe that the following diagram is commutative:
\begin{center}
     $\xymatrix{G\times (G\times X)\ar[d]_-{1_G\times p^\theta_K}\ar[r]^-{h}& G\times X\ar[d]^-{p^\theta_K} \\
    G\times(G\times_K X) \ar[r]_-{\mu^\theta_K}& G\times_KX
    }$
\end{center}
Since $p^\theta_K$ is open, we conclude that $\mu^\theta_K$ is continuous.
\end{proof}
When there is no chance for confusion, we remove $\theta$ in (\ref{qo2}) and (\ref{action2}). 
Furthermore, regarding  Proposition \ref{Kellen1} we have the next result.

\begin{pro}\label{iota2}
 \rm{Let $G$ be a topological group, $K$ be a subgroup of $G$ and $(X,\theta)$ be a $K$-space. Then the map:
 \begin{equation}
     \iota^\theta_K: X\ni x\mapsto [e,x]_K\in G\times_KX,
 \end{equation}
is injective and a $K$-isovariant map, where $G\times_KX$ is considered as a $K$-space by restricting the action $\mu_K$ to $K\times (G\times_KX).$ Moreover, the following assertions hold:
 \begin{enumerate}
     \item [(i)] $\iota^\theta_K(X)$ is open (closed) in $G\times_KX$ if and only if $K*X$ is open (closed) in $G\times X$.
     \item [(ii)] The map $\iota^\theta_K: X\rightarrow \iota^\theta_K(X)$ is a homeomorphism.
     \item [(iii)] If $K=G$, then $\iota^\theta_G=\iota^\theta,$ where $\iota^\theta$ is defined in (\ref{iota}).
 \end{enumerate}}
\end{pro}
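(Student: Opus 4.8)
The plan is to reduce every assertion to a single identity, namely that the $p^\theta_K$-saturation of the image $\iota^\theta_K(X)$ is exactly $K*X$:
\[
(p^\theta_K)^{-1}\bigl(\iota^\theta_K(X)\bigr)=K*X .
\]
I would prove this first by unwinding \eqref{clase}: a class $[g,x]_K$ lies in $\iota^\theta_K(X)$ precisely when $(g,x)$ is $K$-equivalent to some $(e,y)$, and \eqref{clase} forces $g=k$ for some $k\in K^x$ together with $y=\theta(g,x)$; thus $[g,x]_K\in\iota^\theta_K(X)$ if and only if $(g,x)\in K*X$. The same bookkeeping yields injectivity (from $[e,x]_K=[e,y]_K$ the witness $k\in K^x$ must equal $e$, so $y=\theta(e,x)=x$ by (PA3)) and the $K$-map property, since for $(k,x)\in K*X$ one reads off $\mu_K(k,[e,x]_K)=[k,x]_K=[e,\theta(k,x)]_K=\iota^\theta_K(\theta(k,x))$ using the witness $\ell=k$ in \eqref{clase}.

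For $K$-isovariance I would compute the isotropy of $\iota^\theta_K(x)$ directly. As $\mu_K$ restricts to a global action of $K$, an element $k\in K$ fixes $\iota^\theta_K(x)$ if and only if $[k,x]_K=[e,x]_K$, and \eqref{clase} shows this happens exactly when $k\in K^x$ and $\theta(k,x)=x$, i.e.\ iff $k\in K_x$; hence $K_{\iota^\theta_K(x)}=K_x$.

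Part (i) then follows formally. The map $p^\theta_K$ is the orbit projection of the (continuous) $K$-partial action on $G\times X$, hence a continuous and open quotient map by \cite[Lemma 3.2]{PU1}. Being a quotient map, $\iota^\theta_K(X)$ is open (resp.\ closed) in $G\times_KX$ if and only if its saturation $(p^\theta_K)^{-1}(\iota^\theta_K(X))=K*X$ is open (resp.\ closed) in $G\times X$, which is the assertion.

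The step I expect to be the main obstacle is part (ii), i.e.\ continuity of the inverse: the obvious candidate open set $p^\theta_K(G\times U)$ meets $\iota^\theta_K(X)$ in the full $K$-saturation of $U$ rather than in $U$. To circumvent this I would exploit that $p^\theta_K$ is open: the restriction of an open surjective quotient map to the saturated set $K*X$ is again an open quotient map $q:=p^\theta_K\restriction_{K*X}\colon K*X\to\iota^\theta_K(X)$. Since $\iota^\theta_K=q\circ(x\mapsto(e,x))$ is continuous and injective, it remains to check openness onto the image; for open $U\subseteq X$ the preimage $q^{-1}(\iota^\theta_K(U))$ equals $\{(g,x)\in K*X:\theta(g,x)\in U\}=(\theta\restriction_{K*X})^{-1}(U)$, which is open by continuity of $\theta$, so $\iota^\theta_K(U)$ is relatively open and $\iota^\theta_K$ is an embedding. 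Finally (iii) is immediate from Proposition \ref{globprod}, which gives $[e,x]_G=[e,x]$ and $G\times_GX=X_G$, whence $\iota^\theta_G=\iota^\theta$.
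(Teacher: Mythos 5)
Your proposal is correct and follows essentially the same route as the paper: both arguments hinge on the saturation identity $(p^\theta_K)^{-1}(\iota^\theta_K(X))=K*X$, deduce injectivity, the $K$-map property, isovariance and item (i) from the same bookkeeping with \eqref{clase}, and reduce item (ii) to the fact that $p^\theta_K$ restricts to an open quotient map $K*X\to\iota^\theta_K(X)$. The only divergence is the finish of (ii): the paper applies the Transgression Theorem to $\theta\colon K*X\to X$ (constant on fibers) to produce an explicit continuous inverse, whereas you check directly that $\iota^\theta_K$ is open onto its image via $q^{-1}(\iota^\theta_K(U))=(\theta\restriction_{K*X})^{-1}(U)$ -- an equally valid and marginally more elementary ending.
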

\begin{proof}

    Throughout the proof we write $\iota_K$ instead of $\iota^\theta_K$. Consider the map $j_e: X\ni x\mapsto (e,x)\in G\times X$. Then $\iota_K$ is continuous because $\iota_K=p_K\circ j_e$. On the other hand, take $x, y\in X$ such that $\iota_K(x)=\iota_K(y)$. Then  there exists $k\in G^x$ such that $k\cdot(e,x)=(e,y)$, therefore $k=e$ and $y=\theta(e,x)\stackrel{\rm{(PA3)}}=x,$ and $\iota_K$ is injective. Now, to  
 check that $\iota_K$ is $K$-isovariant, observe first that $\iota_K$ is a $K$-map. Indeed, take $(k,x)\in K*X$. Then $$\mu_K(k,\iota_K(x))=\mu_K(k,[e,x]_K)=[k,x]_K=[e,\theta(k,x)]_K=\iota_K(\theta(k,x)),$$ and  $\iota_K$ is a $K$-map. 
    Now take $x\in X$. Since $K_x\subseteq K_{\iota_K(x)}$, it only remains to verify that $K_{\iota_K(x)}\subseteq K_x$. Let $k\in K_{\iota_K(x)}$. Then $\mu_K(k,\iota_K(x))=\iota_K(x)$ and there exists $h\in K^x$ such that $h\cdot (k,x)=(e,x)$. Thus $(kh\m,\theta(h,x))=(e,x)$ and $h=k$, then $k\in K^x$ and $\theta(k,x)=\theta(h,x)=x$.  This shows that $k\in K_x$ and $\iota_K$ is $K$-isovariant.

    Before passing to the proof  of (i), (ii), and (iii) observe that
    \begin{equation}\label{preimagen}
        p_K\m(\iota_K(X))=K*X
    \end{equation}
    Indeed, take $(k,x)\in K*X$. Then
    \begin{center}
        $p_K(k,x)=[k,x]_K=[e,\theta(k,x)]_K\in \iota_K(X)$,
    \end{center}
    therefore $K*X\subseteq p_K\m(\iota_K(X))$. On the other hand, let $(g,x)\in p_K\m(\iota_K(X))$. There exists $z\in X$ such that $[g,x]_K=[e,z]_K$ and there is $k\in K^x$ with $k\cdot(g,x)=(e,z)$, whence $g=k$ and $(g,x)\in K*X$.
    
    $(i)$ Since $p_K$ is a quotient map, then $\iota_K(X)$ is open (closed) in $G\times_KX$ if and only if $K*X$ is open (closed) in $G\times X$ by (\ref{preimagen}).

    $(ii)$ Notice that $p_K: p_K\m(\iota_K(X))\rightarrow \iota_K(X)$ is a quotient map because $p_K$ is an open map, (see \cite[Chapter VI, Theorem 2.1]{JD}). But $p_K\m(\iota_K(X))=K*X$ then $p_K: K*X\rightarrow \iota_K(X)$, $(k,x)\mapsto [k,x]_K=[e,\theta(k,x)]_K$, is a quotient map. On the other hand, observe that $\theta: K*X\rightarrow X$ is constant in the fibers of $p_K$. Indeed, take $x\in X$ and $z\in p_K\m(\iota_K(x))$. By (\ref{clase}) there is $k\in K^x$ such that $z=(k\m,\theta(k,x))$ and we have $\theta(z)=\theta(k\m,\theta(k,x))\stackrel{\rm{(PA1)}}=x$. Then $\theta$ is constant in the fiber $p_K\m(\iota_K(x))$. By the Transgression Theorem \cite[Chapter VI, Theorem 3.2]{JD} there exists a continuous map $j: \iota_K(X)\rightarrow X$ such that the following diagram is commutative:

\begin{center}
     $\xymatrix{K*X\ar[d]_-{p_K}\ar[r]^-{\theta}& X\\
    \iota_K(X) \ar[ur]_-{j}& 
    }$
\end{center}
then $j(\iota_K(x))=j([e,x]_K)=j(p_K(e,x))=\theta(e,x)=x$, for each $x\in X$. We conclude that $j\circ \iota_X=\rm{id}_X$. Further, given $z\in \iota_K(X)$ there exists $x\in X$ such that $z=[e,x]_K$  and we have 
\begin{center}
    $\iota_K(j(z))=\iota_K(j([e,x]_K))=\iota_K(j(p_K(e,x)))=\iota_K(\theta(e,x))\stackrel{\rm{(PA3)}}=\iota_K(x)=[e,x]_K=z$,
\end{center} 
then $\iota_K\circ j=\rm{id}_{\iota_K(X)}$ and $j$ is the inverse of $\iota_K$, thus $\iota_K:X\rightarrow \iota_K(X)$ is a homeomorphism, as desired.
\end{proof}


Now we state the last result of this section.
\begin{pro}\label{iota3}{\rm
	Let $G$ be a topological group, $K$ be a subgroup of $G$, and let $(X,\theta)$ be a $K$-space. Then $G\times_K(X_K)$ and $G\times_KX$ are $G$-homeomorphic.}
 \end{pro}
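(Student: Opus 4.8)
The plan is to exhibit an explicit $G$-homeomorphism $\Phi\colon G\times_K(X_K)\to G\times_K X$ and to recover continuity from the openness of the quotient maps involved. Applying Proposition~\ref{globprod} to the $K$-space $(X,\theta)$, the enveloping space $X_K=K\times_K X$ has points $[k,x]$ and carries the \emph{global} $K$-action $\mu^\theta$ with $\mu^\theta(\ell,[k,x])=[\ell k,x]$; hence $X_K\in(\mathcal{A}_K)_0$ and, by Proposition~\ref{tpro}, the twisted product $G\times_K(X_K)$ is a well-defined $G$-space. I would set
\[
\Phi\big([g,[k,x]]_K\big)=[gk,x]_K,\qquad \Psi\big([g,x]_K\big)=[g,[e,x]]_K=[g,\iota^\theta(x)]_K,
\]
with $\iota^\theta\colon X\to X_K$ the canonical map \eqref{iota}, and propose $\Psi$ as a two-sided inverse of $\Phi$.

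The delicate point --- and the step I expect to be the main obstacle --- is the well-definedness of $\Phi$ (equivalently, the injectivity of $\Psi$), since it forces one to reconcile the two distinct equivalence relations in play: the globalization relation \eqref{eqgl} that defines $X_K$, and the twisted-product relation \eqref{clase}. Suppose $[g,[k,x]]_K=[g',[k',x']]_K$. Because $\mu^\theta$ is global on $X_K$, \eqref{clase} provides $\ell\in K$ with $g'=g\ell\m$ and $[k',x']=\mu^\theta(\ell,[k,x])=[\ell k,x]$ in $X_K$. Writing $m=(k')\m\ell k$, I would unwind the relation \eqref{eqgl} for the pair $((k',x'),(\ell k,x))$ and apply (PA1) to conclude $m\in K^x$ and $\theta(m,x)=x'$; this shows $(g'k',x')=(gk\,m\m,\theta(m,x))\in[gk,x]_K$, whence $[gk,x]_K=[g'k',x']_K$ and $\Phi$ is well defined. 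Well-definedness of $\Psi$ is easier, using that $[k,x]=[e,\theta(k,x)]$ in $X_K$ whenever $k\in K^x$ (again a direct check of \eqref{eqgl}). The relations $\Phi\circ\Psi=\mathrm{id}$ and $\Psi\circ\Phi=\mathrm{id}$ then follow by substitution; for the latter one observes $[gk,[e,x]]_K=[g,[k,x]]_K$ by taking $\ell=k$ in \eqref{clase}.

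For continuity I would use that $p_K$ is open (as already invoked in the proof of Proposition~\ref{tpro}), that the globalization quotient $q\colon K\times X\to X_K$, $(k,x)\mapsto[k,x]$, is open by \eqref{qo}, and that the orbit map $P\colon G\times X_K\to G\times_K(X_K)$ is open (by the same reason as $p_K$). Then $Q=P\circ(1_G\times q)\colon G\times K\times X\to G\times_K(X_K)$ is an open continuous surjection, hence a quotient map, and $\Phi\circ Q$ equals the continuous map $(g,k,x)\mapsto p_K(gk,x)$; so $\Phi$ is continuous by the universal property of $Q$. Symmetrically, $\Psi\circ p_K$ equals the continuous map $(g,x)\mapsto P(g,\iota^\theta(x))$, and since $p_K$ is a quotient map, $\Psi$ is continuous. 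Finally, as both $G$-actions are left multiplication on the first coordinate (see \eqref{action2}), one has $\Phi(g'\cdot[g,[k,x]]_K)=[g'gk,x]_K=g'\cdot\Phi([g,[k,x]]_K)$ and likewise for $\Psi$, so $\Phi$ is a $G$-homeomorphism with $G$-equivariant inverse $\Psi$; therefore $G\times_K(X_K)$ and $G\times_K X$ are $G$-homeomorphic.
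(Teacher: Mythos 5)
Your proposal is correct and follows essentially the same route as the paper: the paper identifies $X_K$ with $K\times_K X$ via Proposition~\ref{globprod} and uses exactly the pair of maps $[g,(k,x)]_K\mapsto [gk,x]_K$ and $[g,x]_K\mapsto[g,(e,x)]_K$, establishing continuity (and well-definedness) through commutative diagrams over the open quotient maps, just as you do with $Q=P\circ(1_G\times q)$. Your explicit unwinding of the two equivalence relations for well-definedness is more detailed than the paper's terse diagram argument, but it is the same underlying idea.
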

\begin{proof}
	We know that $X_K=K\times_K X,$ because of   Proposition \ref{globprod}. Consider the map $m: G\times_K(K\times X)\ni ([g,(h,x)]_K)\mapsto [gh,x]_K\in G\times_KX,$ and the map $n: G\times_KX\ni [g,x]_K\mapsto[g,(e,x)]_K\in G\times_K(K\times_KX).$  Notice that $m$ and $n$ are well defined  $G$-maps because the following diagrams are commutative:
\begin{multicols}{2}
	\begin{center}
		$\xymatrix{G\times (K\times X)\ar[d]_-{1_G\times p^\theta_K}\ar[r]^-{j}& G\times X\ar[dd]^-{p^\theta_K} \\
			G\times(K\times_K X) \ar[d]_-{p}& \\ G\times_K(K\times_KX) \ar[r]^-{m} & G\times_KX
		}$
	\end{center}
	\begin{center}
		$\xymatrix{ G\times X \ar[dd]_-{p^\theta_K} \ar[r]^-{j'} & G\times (K\times X)\ar[d]^-{1_G\times p^\theta_K}  \\
			& G\times(K\times_K X) \ar[d]^-{p} \\ 
			G\times_KX \ar[r]^-{n} & G\times_K(K\times_KX)  
		}$
	\end{center}
\end{multicols}	
 where $p$ is the orbital projection, $j(g,(h,x))=(gh,x)$ and $j'(g,x)=(g,(e,x))$, for each $g, h\in G$ and $x\in X$. It is clear that $m$ is the inverse of $n$, then we conclude that $G\times_K(X_K)$ and $G\times_KX$ are $G$-homeomorphic.
 \end{proof}
\subsection{Adjoint functors}

Let $G$ be a topological group and let $K$ be a subgroup of $G$. The construction of twisted products induces a functor $G\times_K-:\mathcal{PA}_K\rightarrow \mathcal{A}_G$ which coincides with the globalization functor $E: \mathcal{PA}_G\rightarrow \mathcal{A}_G$ when $G=K$, as follows:

\noindent Given an object $(X,\theta)$ in $\mathcal{PA}_K$, we define  $(G\times_K-)(X,\theta)=(G\times_KX,\mu_K)$, where $\mu_K$ is defined in Proposition \ref{action2}.  Now, for  objects $(X,\theta)$ and $(Y,\eta)$ in $\mathcal{PA}_K$, and a $K$-map $f:X\rightarrow Y$, the functor $G\times_K-$ sends $f$ to $(G\times_K-)(f):=G\times_Kf,$ where 
$$G\times_Kf: G\times_KX\ni [g,x]_K\mapsto [g,f(x)]_K\in G\times_KY.$$
We notice that $G\times_Kf$ is well defined. Indeed, take $(g,x)\in G\times X$ and $(h,y)\in [g,x]_K$. By (\ref{clase}) there exists $k\in K^x$ such that $(h,y)=(gk\m,\theta(k,x))$. But $f$ is a $K$-map, then

\begin{center}
    $[gk\m,f(\theta(k,x))]_K=[gk\m,\eta(k,f(x))]_K\stackrel{\rm{(PA1)}}=[gk\m k,\eta(k\m,\eta(k,f(x)))]_K\stackrel{\rm{(PA1)}}=[g,f(x)]_K$,
\end{center}
and $G\times_Kf$ is well defined.  Now consider the following commutative diagram:

\begin{center}
     $\xymatrix{ G\times X\ar[d]_-{p^\theta_K}\ar[r]^-{1_G \times f}& G\times Y\ar[d]^-{p^\eta_K} \\
    G\times_K X \ar[r]_-{G\times_Kf}& G\times_KY
    }$
\end{center}
Since $p^\theta_K$ and $p^\eta_K$ are open maps, then $G\times_Kf$ is continuous. Moreover, restricting the group action from $G$ to $K$ giveses a functor 
\begin{center}
  $\rm{res}$$ ^G_K: \mathcal{A}_G\rightarrow \mathcal{PA}_K$,
\end{center}
which coincides with the inclusion functor when $G=K$. We have the next result.


\begin{teo}\label{glof2}
{\rm    Let $G$ be a topological group and $K$ be a subgroup of $G$. Then $G\times_K-$ is left adjoint to $\rm{res}$$^G_K$.}
\end{teo}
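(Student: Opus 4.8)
The plan is to establish the adjunction by producing a bijection
$$\Phi_{X,Z}\colon \mathcal{A}_G(G\times_KX,Z)\longrightarrow \mathcal{PA}_K(X,\mathrm{res}^G_K Z)$$
that is natural in the $K$-space $(X,\theta)$ and in the global $G$-space $(Z,\nu)\in(\mathcal{A}_G)_0$; the map $\iota^\theta_K$ of Proposition \ref{iota2} will serve as the unit of the adjunction. Concretely, I would set $\Phi_{X,Z}(F)=F\circ\iota^\theta_K$. Since $\iota^\theta_K$ is a $K$-map and the restriction of a $G$-map is again a $K$-map, $\Phi_{X,Z}(F)$ does lie in $\mathcal{PA}_K(X,\mathrm{res}^G_K Z)$. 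For the inverse I would define, for each $K$-map $f\colon X\to\mathrm{res}^G_K Z$,
$$\Psi_{X,Z}(f)\colon G\times_KX\ni[g,x]_K\longmapsto \nu(g,f(x))\in Z.$$

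First I would verify that $\Psi_{X,Z}(f)$ is well defined. If $(h,y)\in[g,x]_K$, then by (\ref{clase}) there is $k\in K^x$ with $(h,y)=(gk\m,\theta(k,x))$, and using that $f$ is a $K$-map together with (PA1),
$$\nu(h,f(y))=\nu(gk\m,f(\theta(k,x)))=\nu(gk\m,\nu(k,f(x)))=\nu(g,f(x)),$$
so the value is independent of the chosen representative. For continuity I would use that $p^\theta_K$ is open, hence a quotient map: the evidently continuous map $G\times X\ni(g,x)\mapsto\nu(g,f(x))\in Z$ factors through $p^\theta_K$ as $\Psi_{X,Z}(f)\circ p^\theta_K$, whence $\Psi_{X,Z}(f)$ is continuous. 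That $\Psi_{X,Z}(f)$ is a $G$-map follows from $\nu(g',\nu(g,f(x)))=\nu(g'g,f(x))=\Psi_{X,Z}(f)(\mu_K(g',[g,x]_K))$, which is precisely the point where the globality of the action $\nu$ on $Z$ is used.

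It then remains to check that $\Phi_{X,Z}$ and $\Psi_{X,Z}$ are mutually inverse and natural. On one hand $\Psi_{X,Z}(f)(\iota^\theta_K(x))=\Psi_{X,Z}(f)([e,x]_K)=\nu(e,f(x))=f(x)$, giving $\Phi_{X,Z}\circ\Psi_{X,Z}=\mathrm{id}$; on the other hand, for a $G$-map $F$ one has $\Psi_{X,Z}(\Phi_{X,Z}(F))([g,x]_K)=\nu(g,F([e,x]_K))=F(\mu_K(g,[e,x]_K))=F([g,x]_K)$ by the $G$-equivariance of $F$, so $\Psi_{X,Z}\circ\Phi_{X,Z}=\mathrm{id}$. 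Naturality reduces to the relation $(G\times_Kf')\circ\iota^\theta_K=\iota^{\theta'}_K\circ f'$ for a $K$-map $f'\colon(X',\theta')\to(X,\theta)$, which is immediate from $(G\times_Kf')([e,x']_K)=[e,f'(x')]_K$, and to the fact that $\mathrm{res}^G_K h=h$ on underlying maps for a $G$-map $h$; both are formal. Taking $K=G$ recovers Proposition \ref{glof}, since then $G\times_G-=E$, $\mathrm{res}^G_G$ is the inclusion $j$, and $\iota^\theta_G=\iota^\theta$ by Proposition \ref{iota2}(iii).

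I expect the main obstacle to be the construction of the inverse $\Psi_{X,Z}(f)$, namely checking simultaneously that the assignment $[g,x]_K\mapsto\nu(g,f(x))$ is consistent on the equivalence classes (\ref{clase}) and that it descends to a continuous map on the quotient: one must carefully thread the partial-action axiom (PA1) and the $K$-equivariance of $f$ through the twisted product, and then invoke the openness of $p^\theta_K$ to transport continuity down. Once $\Psi_{X,Z}(f)$ is in hand, the verification that it is a $G$-map, that $\Phi$ and $\Psi$ are mutually inverse, and the naturality identities are all routine.
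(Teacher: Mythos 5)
Your proposal is correct and follows essentially the same route as the paper: the adjunction bijection is $F\mapsto F\circ\iota_K$ with inverse $f\mapsto\bigl([g,x]_K\mapsto\nu(g,f(x))\bigr)$, well-definedness is checked on representatives via (\ref{clase}), (PA1) and the $K$-equivariance of $f$, continuity is transported down through the open quotient map $p^\theta_K$, and the mutual-inverse and naturality verifications are the same routine computations. No substantive differences.
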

\begin{proof}
Let $(X,\theta)$ be an object in $\mathcal{PA}_K$ and $(Y,\eta)$ be an object in $\mathcal{A}_G$. We define the following map:
$$\lambda_{(X,Y)}:\mathcal{A}_G(G\times_KX,Y)\ni f\mapsto f\circ\iota_K\in \mathcal{PA}_K(X,\ {\rm res} ^G_K(Y)),$$
where $\iota_K$ is defined in (\ref{iota2}). Since $f$ and $\iota_K$ are $K$-maps, then $\lambda_{(X,Y)}(f)$ is a $K$-map. Now consider the map

\begin{center}
    $\tau_{(X,Y)}:\mathcal{PA}_K(X,\ $\rm{res}$ ^G_K(Y))\ni f\mapsto \tau_{(X,Y)}(f) \in \mathcal{A}_G(G\times_KX,Y)$, 
\end{center}
where $\tau_{(X,Y)}(f)([g,x]_K)=\eta(g,f(x)),$ for each $[g,x]_K\in G\times_KX.$  We check  that $\tau_{(X,Y)}(f)$ is well defined. Indeed, take $(g,x)\in G\times X$ and $(h,y)\in [g,x]_K$, then there exists $k\in K^x$ such that $(h,y)=(gk\m,\theta(k,x))$ and
\begin{center}
 $\tau_{(X,Y)}(f)([h,y]_K)=\tau_{(X,Y)}(f)([gk\m,\theta(k,x)]_K)=\eta(gk\m,f(\theta(k,x)))=\eta(g,f(x)),$  
\end{center}
as desired. Further, consider the following commutative diagram:

\begin{center}
     $\xymatrix{G\times X\ar[d]_-{p^\theta_K}\ar[r]^-{m}& Y\\
    G\times_KX \ar[ur]_-{\tau_{(X,Y)}(f)}& 
    }$
\end{center}
where $m(g,x)=\eta(g,f(x))$, for each $(g,x)\in G\times X$. Since $p^\theta_K$ is open, then $\tau_{(X,Y)}(f)$ is continuous. Finally, it is clear that $\tau_{(X,Y)}(f)$ is a $G$-map. Now we shall check that $\lambda_{(X,Y)}$ is the inverse of $\tau_{(X,Y)},$
let $f\in \mathcal{A}_G(G\times_KX,Y)$, and $(g,x)\in G\times _KX$. Then:
\begin{center}
    $\tau_{(X,Y)}(\lambda_{(X,Y)}(f))([g,x]_K)=\eta(g,\lambda_{(X,Y)}(f)(x))=\eta(g,f([e,x]_K))=f([g,x]_K)$
\end{center}
and $\tau_{(X,Y)}(\lambda_{(X,Y)}(f))=f$. Therefore $\lambda_{(X,Y)}$ is a right inverse of $\tau_{(X,Y)}$. Now if $f\in \mathcal{PA}_K(X,\ $\rm{res}$ ^G_K(Y))$ and  $x\in X$, we have 
\begin{equation*}
    \begin{aligned}
        \lambda_{(X,Y)}(\tau_{(X,Y)}(f))(x) & =\tau_{(X,Y)}(f)([e,x]_K)
        & =\eta(e,f(x)))
        & =f(x),
    \end{aligned}
\end{equation*}
then $\lambda_{(X,Y)}(\tau_{(X,Y)}(f))=f$ and $\lambda_{(X,Y)}$ is a left inverse of $\tau_{(X,Y)}$. We conclude that $\lambda_{(X,Y)}$ is a bijective map.

Take $(X',\theta')\in (\mathcal{PA}_K)_0,$ $(Y',\eta')\in (\mathcal{A}_G)_0$, $r\in \mathcal{PA}_K(X',X)$ and $s\in \mathcal{A}_G(Y,Y')$. Then the following diagram is commutative:

\begin{center}
    $\xymatrix{ \mathcal{A}_G(G\times_KX,Y)\ar[d]_-{s_*}\ar[r]^-{\lambda_{(X,Y)}}& \mathcal{PA}_K(X,\rm{res}^\textit{G}_\textit{K}(Y))\ar[d]^-{\rm{res}^\textit{G}_\textit{K}(s)_*} \\
    \mathcal{A}_G(G\times_KX,Y') \ar[r]_-{\lambda_{(X,Y')}}& \mathcal{PA}_K(X,\rm{res}^\textit{G}_\textit{K}(Y'))
    }$
\end{center}
Indeed, for each $f\in \mathcal{A}_G(G\times_KX,Y)$ and $x\in X$ we have
\begin{equation*}
    \begin{aligned}
        \textrm{res}^\textit{G}_\textit{K}(s)_* (\lambda_{(X,Y)}(f))(x)& =\textrm{res}^\textit{G}_\textit{K}(s)(\lambda_{(X,Y)}(f))(x)\\
        & =\textrm{res}^\textit{G}_\textit{K}(s)(f([e,x]_K))\\
        & =(s\circ f)([e,x]_K))\\
        & =\lambda_{(X,Y')}(s\circ f)(x)\\
        & = \lambda_{(X,Y')}(s_*(f))(x).
    \end{aligned}
\end{equation*}

\noindent Similarly, the following diagram is commutative:
\begin{center}
    $\xymatrix{ \mathcal{A}_G(G\times_KX,Y)\ar[d]_-{(G\times_Kr)^*}\ar[r]^-{\lambda_{(X,Y)}}& \mathcal{PA}_K(X,\rm{res}^\textit{G}_\textit{K}(Y))\ar[d]^-{r^*} \\
    \mathcal{A}_G(G\times_KX',Y) \ar[r]_-{\lambda_{(X',Y)}}& \mathcal{PA}_K(X',\rm{res}^\textit{G}_\textit{K}(Y))
    }$
\end{center}
Indeed, take $l\in \mathcal{A}_G(G\times_KX,Y)$ and $x\in X'$. Then

\begin{equation*}
    \begin{aligned}
        r^*(\lambda_{(X,Y)}(f))(x)& =(\lambda_{(X,Y)}(f)\circ r)(x)\\
        & =f([e,r(x)]_K)\\
        & =(f\circ G\times_Kr)([e,x]_K)\\
        & =\lambda_{(X',Y)}(f\circ G\times_Kr)(x)\\
        & = \lambda_{(X',Y)}((G\times_Kr)^*(f))(x),
    \end{aligned}
\end{equation*}
this completes the proof.
\end{proof}
\begin{coro}\label{equivalencia}
    Let $G$ be a topological group, $K$ be a subgroup of $G$ and $\{(X_j,\theta^j): j\in J\}$ be a family of $K$-spaces, with $J$ finite. Put $X=\prod\limits_{j\in J}X_j$ with the diagonal partial action of $K$. Then the following map is a $G$-homeomorphism:
    \begin{center}
        $G\times_K X\ni [g,(x_1,x_2,\dots,x_n)]_K\mapsto ([g,x_1]_K,[g,x_2]_K,\dots,[g,x_n]_K)\in \prod\limits_{j\in J}(G\times_KX_j)$,
    \end{center}
    In particular, if $K=G$ then
    \begin{equation}\label{pdia}
        X_G\ni[g,(x_1,x_2,\dots,x_n)]\mapsto ([g,x_1],[g,x_2],\dots,[g,x_n])\in \prod\limits_{j\in J}(X_j)_G, \end{equation}
    is a $G$-homeomorphism.
\end{coro}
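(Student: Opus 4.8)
The plan is to realise the displayed map as the morphism induced by the projections of the product and then to exhibit an explicit inverse. Write $\rho_j\colon X\to X_j$ for the canonical projections, which are $K$-maps because $X$ carries the diagonal partial action (see \eqref{tre} and \eqref{treta}). Applying the functor $G\times_K-$ of Theorem \ref{glof2} to each $\rho_j$ produces $G$-maps $G\times_K\rho_j\colon G\times_KX\to G\times_KX_j$, $[g,x]_K\mapsto[g,x_j]_K$, and since $\prod_{j\in J}(G\times_KX_j)$ carries the diagonal $G$-action, the tuple $\Phi:=\langle G\times_K\rho_j\rangle_{j\in J}$ is exactly the map in the statement. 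Being a map into a product with continuous $G$-map components, $\Phi$ is automatically a continuous $G$-map, so this direction needs no separate verification. I would stress at the outset that one cannot merely invoke Theorem \ref{glof2} to say that $G\times_K-$ preserves this product: that functor is a \emph{left} adjoint, hence preserves colimits rather than the limit $\prod_{j\in J}X_j$, so a hands-on argument is unavoidable.

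Next I would check injectivity directly. Suppose $\Phi([g,x]_K)=\Phi([g',x']_K)$, so that $[g,x_j]_K=[g',x'_j]_K$ for every $j$. By \eqref{clase}, for each $j$ there is $k_j\in K^{x_j}$ with $g'=gk_j^{-1}$ and $x'_j=\theta^j(k_j,x_j)$; comparing the first coordinates forces $k_j=g'^{-1}g$ to be independent of $j$. Hence the common element $k:=g'^{-1}g$ lies in $\bigcap_{j}K^{x_j}=K^x$, and $x'=\Delta(\theta)(k,x)$, so $(g',x')\in[g,x]_K$ and $\Phi$ is injective.

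The heart of the proof, and the step I expect to be the main obstacle, is producing a continuous inverse, equivalently surjectivity together with continuity of the inverse assignment. Given a tuple $\big([g_1,x_1]_K,\dots,[g_n,x_n]_K\big)$ I would try to replace each entry by a representative with a \emph{common} first coordinate $g$, that is, to find $g\in G$ and $y_j$ with $[g_j,x_j]_K=[g,y_j]_K$ for all $j$; then setting $\Psi$ of the tuple equal to $[g,(y_1,\dots,y_n)]_K$ gives a candidate inverse whose independence of the chosen common representative is guaranteed by the injectivity just established. The delicate point is the existence of such a common representative: by \eqref{clase} this amounts to choosing $k_j\in K^{x_j}$ making the elements $g_jk_j^{-1}$ all equal, which constrains the cosets $g_jK$ and the isotropy data $K^{x_j}$ simultaneously. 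I would isolate the exact compatibility this demands and verify it inside the twisted-product framework, and only then deduce continuity of $\Psi$ from the fact that the quotient maps $p^{\theta^j}_K$ and $p^\theta_K$ are open, using the same commutative-diagram technique employed in Proposition \ref{tpro} and Theorem \ref{glof2}.

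Finally, the special case $K=G$ recorded in \eqref{pdia} would follow with no extra work. By Proposition \ref{globprod} we have $G\times_GX=X_G$ with $[g,x]_G=[g,x]$, and likewise $(X_j)_G=G\times_GX_j$, so the second displayed map is literally $\Phi$ specialised to $K=G$; the conclusion transfers verbatim from the general case.
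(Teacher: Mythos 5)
Your construction of the forward map $\Phi$, its continuity and $G$-equivariance, and the injectivity argument are all correct, and your warning that Theorem \ref{glof2} cannot simply be invoked (a left adjoint preserves colimits, not the product) is well taken; the paper itself supplies no argument for this corollary. The difficulty is that the step you yourself call ``the heart of the proof'' is never carried out: you only describe a plan to find, for a tuple $([g_1,x_1]_K,\dots,[g_n,x_n]_K)$, a common first coordinate $g$ with $[g_j,x_j]_K=[g,y_j]_K$ for all $j$, and you explicitly defer ``isolating the exact compatibility this demands.'' As written, surjectivity of $\Phi$ is not proved, so the proposal is incomplete at precisely the decisive point.

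Moreover, that step cannot be completed, because the required common representative need not exist. By \eqref{clase} the first coordinates occurring in the class $[g_j,x_j]_K$ form the set $g_j(K^{x_j})^{-1}$, so a common representative exists only when $\bigcap_j g_j(K^{x_j})^{-1}\neq\emptyset$ --- exactly the ``compatibility'' you postponed, and your own injectivity computation (forcing $k_j=g'^{-1}g$ independent of $j$) already shows the image of $\Phi$ consists only of such compatible tuples. Take $G=K=\mathbb{Z}_2$ and let $X_1=X_2$ be a one-point space with $K*X_j=\{e\}\times X_j$, a legitimate nice partial action. Then $K^{x_j}=\{e\}$, each class is a singleton, $G\times_KX_j$ has two points, $\prod_j(G\times_KX_j)$ has four, while $G\times_K(X_1\times X_2)$ has two, and $\Phi$ is the non-surjective diagonal; the same example defeats \eqref{pdia} for $K=G$. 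So the gap in your proposal is not a technical omission but the exact point at which the statement itself breaks down: no argument will close it without extra hypotheses (for instance $K=G$ with all $\theta^j$ global, or replacing the target by the untwisted product, i.e.\ using the map $(G\times_KX)\times Y\ni([g,x]_K,y)\mapsto[g,(x,y)]_K\in G\times_K(X\times Y)$ for $Y$ carrying a global trivial action, which is what Proposition \ref{Ghomot} actually requires).
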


\subsection{Metrizability of twisted products}

In this section, sufficient conditions are established for the metrizability of twisted products. We start with the next result.

\begin{lem}\label{T1}
    Let $G$ be a topological group, $K$ be a subgroup of $G$ and $(X,\theta)$ be a  $K$-space.  Suppose that  $X$ is Hausdorff, then  the following statements hold:  \begin{enumerate}
        \item [(i)] Let $x\in X,$ if $K^x$ is closed in $G$, then $(p^\theta_K)^{-1}(p_K^\theta(g,x))$ is closed in $G\times X$, for each $g\in G,$ in particular the space $G\times_KX$ is $T_1$ provided that $K^x$ is closed in $G$, for each $x\in X$.       
    \end{enumerate}
    Now suppose that $X$ and $G$ are  second countable, then:
\begin{enumerate}
        \item [(ii)] $G\times_KX$ is metrizable if and only if $G\times_KX$ is regular and $T_1$.            
    \end{enumerate}

        \end{lem}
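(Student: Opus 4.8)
The plan is to treat the two parts separately, handling (i) by a direct closedness argument on the fibers of $p^\theta_K$ and (ii) by Urysohn's metrization theorem.

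For (i), I would first identify the fiber explicitly: by (\ref{clase}) the set $(p^\theta_K)^{-1}(p^\theta_K(g,x))$ is exactly the equivalence class $[g,x]_K=\{(gk\m,\theta(k,x)):k\in K^x\}$. To show it is closed in $G\times X$, I take a net $(gk_\alpha\m,\theta(k_\alpha,x))$ with $k_\alpha\in K^x$ converging to some point $(h,y)\in G\times X$, and argue that $(h,y)\in[g,x]_K$. Convergence of the first coordinate $gk_\alpha\m\to h$, together with continuity of multiplication and inversion in $G$, forces $k_\alpha\to h\m g=:k$; since $K^x$ is closed in $G$, the limit satisfies $k\in K^x$, and in particular $(k,x)\in K*X$. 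Then $(k_\alpha,x)\to(k,x)$ inside $K*X$, so continuity of $\theta$ gives $\theta(k_\alpha,x)\to\theta(k,x)$, while by hypothesis the second coordinates satisfy $\theta(k_\alpha,x)\to y$; here the Hausdorffness of $X$ yields $y=\theta(k,x)$. A short computation using $k=h\m g$ shows $gk\m=h$, whence $(h,y)=(gk\m,\theta(k,x))\in[g,x]_K$, proving the fiber is closed. The ``in particular'' clause then follows at once: since $p^\theta_K$ is a quotient map, the singleton $\{[g,x]_K\}$ is closed in $G\times_KX$ precisely when its preimage $[g,x]_K$ is closed in $G\times X$, so closedness of $K^x$ for every $x$ makes $G\times_KX$ a $T_1$ space.

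For (ii), the forward implication is immediate, as every metrizable space is regular and $T_1$. For the converse I would invoke the Urysohn metrization theorem, for which the one ingredient to supply is second countability of $G\times_KX$. Since $G$ and $X$ are second countable, so is $G\times X$; and because the orbital projection $p^\theta_K$ is open and surjective (by \cite[Lemma 3.2]{PU1}, as used after (\ref{qmap})), the image under $p^\theta_K$ of a countable base of $G\times X$ is a countable base of $G\times_KX$. Thus $G\times_KX$ is second countable, and being regular and $T_1$ it is $T_3$, so Urysohn's theorem gives metrizability.

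The routine parts are the algebraic bookkeeping in (i) and the citation of Urysohn in (ii); the only genuinely delicate point is the net argument in (i), where one must use the closedness of $K^x$ to place the limit $k=h\m g$ back inside $K^x$, and the Hausdorffness of $X$ to identify $y$ with $\theta(k,x)$ via uniqueness of limits. Without either hypothesis the limit of the net could escape the fiber. I would also take care that nets, rather than sequences, are used throughout (i), since no countability is assumed in that part of the statement.
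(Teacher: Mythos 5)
Your proposal is correct and follows essentially the same route as the paper: a net argument on the fiber $[g,x]_K$ using closedness of $K^x$, continuity of $\theta$, and Hausdorffness of $X$ for part (i), and second countability of $G\times_KX$ via the open surjection $p^\theta_K$ plus Urysohn's metrization theorem for part (ii). Your write-up is in fact slightly more careful than the paper's, making explicit the uniqueness-of-limits step and the need for nets rather than sequences.
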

\begin{proof}
 (i) Take $(h,y)\in \overline {(p^\theta_K)^{-1}(p_K^\theta(g,x)) }$. By (\ref{clase}) there exists a net  $\{k_\lambda\}_{\lambda\in \Lambda}$ in $K^x$ such that $\lim (gk\m_\lambda, \theta(k_\lambda,x))= (h,y)$. Then $\lim k_\lambda= h\m g\in K^x$ and $y=\lim \theta(k_\lambda,x)=\theta(h\m g,x),$ from this we get that $$(h,y)=(g(h\m g)\m,\theta(h\m g, x))\in (p^\theta_K)^{-1}(p_K^\theta(g,x)),$$  thus  $(p^\theta_K)^{-1}(p_K^\theta(g,x))$ is closed in $G\times X$.


 \noindent(ii) Since $G\times X$ is second countable and $p_K^\theta:G\times X\rightarrow G\times_KX$ is open, then $G\times_KX$ second countable. By Urysohn’s metrization Theorem, the space $G\times_KX$ is metrizable whenever it is regular and $T_1$.
    \end{proof}
We recall the next result.
\begin{lem}\label{regular}\rm{\cite[Theorem 4.6]{PU1}
    Let $G$ be a topological group, $K$ be a subgroup of $G$ and $(X,\theta)$ be a regular $K$-space such that $\theta$ is nice. Then $G\times_KX$ is regular under any of the following conditions:
    \begin{enumerate}
        \item [(i)] $X_k$ is closed in $X$,  for each $k\in K$.
        \item [(ii)] $X$ is a locally compact metric space and $R$ is closed in $(K\times X)^2$, where $R$ is defined in (\ref{eqgl}).
    \end{enumerate}  }
\end{lem}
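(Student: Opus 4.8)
The plan is to verify the regularity axiom directly, separating an arbitrary point from a disjoint closed set, and to exploit the homogeneity coming from the global action $\mu_K$. Since $\mu_K$ makes $G$ act on $G\times_KX$ by homeomorphisms (Proposition \ref{tpro}) and every point has the form $[g,x]_K=\mu_K(g,[e,x]_K)=g\cdot\iota^\theta_K(x)$, it suffices to establish the separation property at points of $\iota^\theta_K(X)$. Here I would use that $\theta$ is nice: by Proposition \ref{iota2}(i) the set $\iota^\theta_K(X)$ is open in $G\times_KX$, and by Proposition \ref{iota2}(ii) the map $\iota^\theta_K\colon X\to\iota^\theta_K(X)$ is a homeomorphism. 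Thus $G\times_KX=\bigcup_{g\in G}g\cdot\iota^\theta_K(X)$ is covered by open charts each homeomorphic to the regular space $X$, so whenever a closed set $C$ avoids $[e,x_0]_K$, the regularity of $X$ already separates $x_0$ from $C\cap\iota^\theta_K(X)$ inside the chart $\iota^\theta_K(X)\cong X$; the entire difficulty is that $C$ may accumulate at $[e,x_0]_K$ from the neighboring translates $g\cdot\iota^\theta_K(X)$ with $g\notin K$.

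For case (i) I would produce a neighborhood base of closed sets at $[e,x_0]_K$. Given an open $U\ni x_0$ in $X$, choose by regularity an open $V$ with $x_0\in V\subseteq\overline{V}\subseteq U$, and consider $\iota^\theta_K(\overline V)$. Writing $p=p^\theta_K$, a direct computation from \eqref{clase} gives
\[
p^{-1}\bigl(\iota^\theta_K(\overline V)\bigr)=\{(k,x)\in K*X : \theta(k,x)\in\overline V\}=\bigcup_{k\in K}\{k\}\times\bigl(X_{k\m}\cap\theta_k\m(\overline V)\bigr).
\]
For each fixed $k$ the set $\theta_k\m(\overline V)$ is closed in $X_{k\m}$, and the hypothesis that $X_{k\m}$ is closed (hence clopen) in $X$ upgrades this to a closed subset of $X$; this is exactly where condition (i) prevents the $k$-slice from acquiring spurious boundary points. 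Controlling the union over $k\in K$ then shows $\iota^\theta_K(\overline V)$ is closed in $G\times_KX$, so that the opens $\iota^\theta_K(V)$ form a neighborhood base of closed sets at $[e,x_0]_K$, and after translating by $\mu_K$ this gives regularity at every point.

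For case (ii) I would instead combine local compactness with Hausdorffness. Because $G\times_KX$ is covered by the open charts $g\cdot\iota^\theta_K(X)$, each homeomorphic to the locally compact space $X$, the space $G\times_KX$ is itself locally compact. For the Hausdorff property I would invoke the standard criterion for open quotient maps: since $p^\theta_K$ is open (as used in Proposition \ref{tpro}), $G\times_KX$ is Hausdorff precisely when the $K$-orbit relation is closed in $(G\times X)^2$. Its restriction to $(K\times X)^2$ is exactly the relation $R$ of \eqref{eqgl}, and a net-chasing argument reduces the closedness of the full orbit relation to that of $R$, using the local compactness of $X$ to keep the relevant nets inside compact sets together with the assumption that $R$ is closed in $(K\times X)^2$. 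Once $G\times_KX$ is locally compact and Hausdorff, it is automatically regular, which finishes this case.

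The main obstacle in both cases is the control of limit points across the gluing of the charts $g\cdot\iota^\theta_K(X)$: inside a single chart everything is inherited from the regular space $X$, and the whole force of hypotheses (i) and (ii) is spent on ruling out that a neighborhood of $[e,x_0]_K$ accumulates from a different chart. Condition (i) achieves this slicewise, through the clopenness of the domains $X_k$, whereas condition (ii) handles it globally by forcing the orbit relation to be closed and then invoking local compactness. The step I would expect to check most carefully is the reduction, in case (ii), of the closedness of the full $K$-orbit relation on $(G\times X)^2$ to the stated closedness of $R$ on $(K\times X)^2$, since that is where local compactness is genuinely needed to pass from the local to the global relation.
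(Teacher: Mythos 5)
You should first be aware that the paper offers no proof of this lemma at all: it is quoted verbatim from \cite[Theorem 4.6]{PU1}, so there is no in-paper argument to compare against, and your proposal has to stand on its own. It does not, because the steps you defer (``controlling the union over $k\in K$'', ``a net-chasing argument reduces\dots'') are exactly the decisive ones, and at least one of them is false as stated. In case (i), the claim that $\iota^\theta_K(\overline{V})$ is closed in $G\times_KX$ does not follow from the hypotheses. Your slicewise computation only shows that each slice $\{k\}\times\theta_k^{-1}(\overline{V})$ is closed in $G\times X$; for the union $p_K^{-1}(\iota^\theta_K(\overline{V}))=\bigcup_{k\in K}\{k\}\times\theta_k^{-1}(\overline{V})$ to be closed one must handle nets $(k_\lambda,x_\lambda)$ with $k_\lambda$ varying, and this needs at least that $K$ be closed in $G$ and that $K*X$ be closed in $G\times X$ --- neither is assumed, and the paper's Theorem \ref{mett}(i) adds ``$K*X$ closed in $G\times X$'' as a separate hypothesis precisely because it is not implied by ``$X_k$ closed for all $k$''. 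Concretely, if $K$ is a dense proper subgroup of $G$ acting trivially on a one-point space $X$, then all hypotheses of (i) (and of (ii)) hold, $G\times_KX=G/K$ carries the indiscrete topology --- so it is regular and the lemma survives --- but $\iota^\theta_K(\overline{V})=\iota^\theta_K(X)$ is a non-closed singleton, so the closed neighborhood base you propose does not exist. A correct proof must produce closed neighborhoods by another device (e.g.\ estimating $\overline{p_K(A\times V)}$ inside a saturated open set, using that $p_K^{-1}(\overline{B})=\overline{p_K^{-1}(B)}$ for the open map $p_K$), not by showing $\iota^\theta_K(\overline{V})$ itself is closed.

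Two further problems affect both cases. Your chart decomposition rests on ``$\iota^\theta_K(X)$ is open in $G\times_KX$ by Proposition \ref{iota2}(i)'', but that proposition requires $K*X$ to be open in $G\times X$, whereas niceness of the $K$-action only gives openness of $K*X$ in $K\times X$; the two coincide only when $K$ is open in $G$ (e.g.\ $K=G$). Without this, neither the neighborhood base $\{\iota^\theta_K(U)\}$ in case (i) nor the local compactness of $G\times_KX$ in case (ii) is available. In case (ii), the reduction of the closedness of the full orbit relation on $(G\times X)^2$ to the closedness of $R$ on $(K\times X)^2$ again needs $K$ closed in $G$ (the limits of $k_\lambda=g_\lambda'^{-1}g_\lambda$ must land in $K$); local compactness plays no role in that reduction --- it is needed only to transfer local compactness to $G\times_KX$ and to pass from Hausdorff to regular at the end. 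Moreover, since the paper's notion of regularity does not include $T_1$ (cf.\ Lemma \ref{T1}(ii)), proving regularity via Hausdorffness is strictly stronger than the statement and provably impossible in examples such as the one above, where hypothesis (ii) holds but $G\times_KX$ is not Hausdorff. In the special case $K=G$ your outline of (ii) is essentially a correct argument; for general $K$, and for case (i) in all generality, the proposal asserts rather than proves the points where the hypotheses must actually be spent.
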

We finish this section with the following result.
\begin{teo}\label{mett}
    \rm{Let $G$ be a separable metrizable group, $K$ be a subgroup of $G$ and $(X,\theta)$ be a separable metrizable $K$-space such that $\theta$ is nice. Then $G\times_KX$ is metrizable under any of the following conditions:
    \begin{enumerate}
        \item [(i)] $K*X$ is closed in $G\times X$.
        \item [(ii)] $G\times_KX$ is $T_1$, $X$ is locally compact and $R$ is closed in $(K\times X)^2$.

    \end{enumerate} }
\end{teo}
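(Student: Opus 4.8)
The plan is to reduce metrizability to the conjunction of regularity and the $T_1$ separation axiom, and then to feed each of the two hypotheses into the already-established Lemmas \ref{T1} and \ref{regular}. The first observation is that a separable metrizable space is second countable; hence both $G$ and $X$ are second countable, so Lemma \ref{T1}(ii) applies and tells us that $G\times_KX$ is metrizable precisely when it is regular and $T_1$. Moreover, since $X$ is metrizable it is in particular regular, so $(X,\theta)$ is a regular $K$-space; as $\theta$ is nice by hypothesis, Lemma \ref{regular} is available in both of its forms. It remains to verify, in each case, that its hypotheses hold and that $G\times_KX$ is $T_1$.

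For case (i), I would translate the closedness of $K*X$ in $G\times X$ into the two closedness conditions needed by the lemmas, using that the preimage of a closed set under a continuous map is closed. Concretely, for fixed $k\in K$ the map $x\mapsto (k\m,x)$ from $X$ into $G\times X$ is continuous and pulls $K*X$ back to $X_k=\{x\in X:(k\m,x)\in K*X\}$; hence $X_k$ is closed in $X$ for every $k\in K$, and Lemma \ref{regular}(i) yields regularity of $G\times_KX$. Likewise, for fixed $x\in X$ the map $g\mapsto (g,x)$ from $G$ into $G\times X$ is continuous and pulls $K*X$ back to $K^x$ (because $K*X\subseteq K\times X$); hence $K^x$ is closed in $G$ for every $x\in X$, and Lemma \ref{T1}(i) gives that $G\times_KX$ is $T_1$. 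Lemma \ref{T1}(ii) then provides metrizability.

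For case (ii) the $T_1$ property is assumed outright, so only regularity must be produced. Since $X$ is separable metrizable it is a metric space, and it is locally compact by hypothesis; together with the assumption that $R$ is closed in $(K\times X)^2$, this is exactly the hypothesis of Lemma \ref{regular}(ii), which therefore delivers regularity of $G\times_KX$. Invoking Lemma \ref{T1}(ii) once more completes this case.

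The argument is essentially a matter of matching hypotheses to the two lemmas, so the only genuine point requiring care is the closedness bookkeeping in case (i). The subtle part is that Lemma \ref{T1}(i) needs $K^x$ to be closed in the \emph{whole} group $G$, not merely in $K$; this is why the hypothesis demands that $K*X$ be closed in $G\times X$ rather than only in $K\times X$, and it is precisely the inclusion $K*X\subseteq K\times X$ that makes the preimage of $K*X$ along $g\mapsto (g,x)$ equal to $K^x$ as a subset of $G$.
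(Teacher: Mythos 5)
Your proposal is correct and follows essentially the same route as the paper's proof: reduce metrizability to regularity plus $T_1$ via Lemma \ref{T1}(ii), and in case (i) deduce closedness of $X_k$ in $X$ and of $K^x$ in $G$ from closedness of $K*X$ in $G\times X$ so that Lemma \ref{regular}(i) and Lemma \ref{T1}(i) apply, while in case (ii) regularity comes from Lemma \ref{regular}(ii). You in fact spell out the preimage arguments that the paper leaves implicit, including the correct observation that $K^x$ must be closed in all of $G$.
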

\begin{proof}
    (i) Let $x\in X$. Since $K*X$ is closed in $G\times X$ then $K^x$ is closed in $G$ and $X_k$ is closed in $X$. Thus the space $G\times_KX$ is regular and $T_1$ by (i) of  Lemma \ref{T1} and Lemma \ref{regular}. Then $G\times_KX$ is metrizable by (ii) of Lemma \ref{T1}.

  \noindent  (ii) Since the space $G\times_KX$ is regular by Lemma \ref{regular},  the result follows by (ii) of Lemma \ref{T1}.
\end{proof}
\section{Homotopy properties of twisted products}\label{gc}
Now we recall some notions that will be useful in this chapter, for more details the interested reader may consult \cite{VM}. 
Let $X$ be a topological space and $A$ be a subspace of $X,$  a continuous map $ r\colon X\to A$ is a {\it retraction} if the restriction of $r$ to $A$ is the identity map on $A.$ In that case, $A$ is called a {\it retract} of $X$.

Now, for $n\geq 0$  an integer, the space $X$ is called {\it connected in dimension $n$} or ${\rm C}^n$, provided that for any integer $m$ with  $0\leq m\leq n$, every continuous function $f:S^m\rightarrow X$ extends to a continuous function $F: B^{m+1}\rightarrow X,$ where $S^m$ denotes the unit $m$-sphere and $B^{m+1}$ the unit $(m+1)$-ball. If $X$ is ${\rm C}^n$ for each $n\geq 0$, then $X$ is called {\it homotopically trivial}. In addition, $X$ is said to be {\it locally connected in dimension $n$} (or ${\rm LC}^n$), provided that for every $x\in X$ and for every $U$ neighborhood of $x$ in $X$ and for every $0\leq m \leq n$, there exists a neighborhood $V$ of $x$ in $X$ so that $V\subseteq U$ and every continuous function $f:S^m\rightarrow V$ extends to a continuous function $F:B^{m+1}\rightarrow U$. Furthermore, we recall that $X$ is {\it contractible} provided that the identity map on $X$ is null-homotopic, i.e. if it is homotopic to some constant map. In addition, $X$ is called {\it locally contractible} provided that for each $x\in X$ and $U$ neighborhood of $x$ in $X$, there exists a neighborhood $V$ of $x$ in $X$ so that $V\subseteq U$ and $V$ is contractible in $U$, that is, there exists a continuous map $H:V\times I\rightarrow U$, where $H_0$ is the inclusion map, $H_1$ a constant map, where $I=[0,1]$ and $H_t:X\ni x\mapsto H(x,t)\in U,$ for any $t\in I$.

We start this section with an example where $X$ is not a retract of $X_G$ and $X_G$ is neither contractible nor homotopically trivial, nor $\rm{C}^1$ even though $X$ is.

\begin{ejem}\label{ex2}\cite[P. 175]{CL}\label{matriz}
	Consider $G=\rm{GL}(2;\mathbb{R})$ acting  partially   on $\mathbb{R}$ by setting  $$G*\mathbb R=\left\{(g,x)\in G\times \mathbb R\mid g=\begin{pmatrix}
			a & b \\
			c & d 
		\end{pmatrix} \text{and } cx+d\neq 0\right\}$$ and 
	$$\theta:G*\mathbb R\ni (g,x)\mapsto \frac{ax+b}{cx+d}\in \mathbb R$$		
	Then  $\mathbb{R}_G$ is homeomorphic to $\mathbb{R}\cup\{\infty\}\simeq  S^1$ thus it is not contractible. Moreover, $\mathbb{R}_G$ is neither homotopically trivial nor $\rm{C}^1$.
\end{ejem} 

Regarding Example \ref{ex2}, we have the following proposition.
\begin{pro}\label{lcn}{\rm 
    Let $(X,\theta)$ be a $G$-space, $X_G$ be its enveloping space and $n\geq 0$ be an integer. If $\theta$ is nice, then the following statements hold:
    \begin{enumerate}
        \item [(a)] $X$ is locally contractible, if and only if, $X_G$ is locally contractible.
        \item [(b)] $X$ is ${\rm LC}^n,$ if and only if, $X_G$ is ${\rm LC}^n.$
    \end{enumerate}}
\end{pro}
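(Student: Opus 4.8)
The plan is to exploit that niceness of $\theta$ makes $\iota=\iota^\theta\colon X\to X_G$ an \emph{open} embedding (Proposition \ref{Kellen1}(iii)), together with the covering $X_G=G\cdot\iota(X)$ recorded after \eqref{iota} and the fact that for a global action each translation $\mu_g\colon X_G\ni z\mapsto \mu^\theta(g,z)\in X_G$ is a self-homeomorphism. I would first isolate two elementary topological observations, valid because both local contractibility and $\mathrm{LC}^n$ are formulated purely in terms of arbitrarily small neighborhoods: (O1) these properties are inherited by open subspaces, and (O2) they are locally detectable, i.e. a space admitting an open cover by subspaces that are locally contractible (resp. $\mathrm{LC}^n$) has the same property. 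Both are routine once the definitions given just before this proposition are unwound.

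For the implication ``$X_G$ locally contractible $\Rightarrow$ $X$ locally contractible'' I would use (O1). Since $\iota$ embeds $X$ as the open subset $\iota(X)\subseteq X_G$ and $\iota\colon X\to\iota(X)$ is a homeomorphism, it suffices to transport the property to $\iota(X)$. Concretely, for $x\in\iota(X)$ and a neighborhood $U$ of $x$ in $\iota(X)$ — which is also a neighborhood in $X_G$ because $\iota(X)$ is open — local contractibility of $X_G$ yields a neighborhood $V\subseteq U$ contracting inside $U$, and $V\subseteq U\subseteq\iota(X)$ keeps the entire contraction inside $\iota(X)$. The $\mathrm{LC}^n$ case is identical.

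For the converse ``$X$ locally contractible $\Rightarrow$ $X_G$ locally contractible'', which I regard as the substantive direction, I would use (O2). Cover $X_G$ by the family $\{\mu_g(\iota(X))\}_{g\in G}$; this is a cover since $X_G=G\cdot\iota(X)$, and each member is open because $\iota(X)$ is open and $\mu_g$ is a homeomorphism. Each $\mu_g(\iota(X))$ is homeomorphic to $\iota(X)\cong X$, hence locally contractible (resp. $\mathrm{LC}^n$). By (O2) the space $X_G$ inherits the property: given $z\in X_G$, choose $g$ with $z\in W:=\mu_g(\iota(X))$, and for a neighborhood $U$ of $z$ in $X_G$ apply local contractibility of $W$ to $U\cap W$, obtaining $V$ open in $W$ (hence open in $X_G$) with $z\in V\subseteq U\cap W$ contracting inside $U\cap W\subseteq U$.

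The only real care needed lies in verifying (O1) and (O2) and in tracking that every neighborhood produced remains open in $X_G$, which is guaranteed because $\iota(X)$, and therefore each $W=\mu_g(\iota(X))$, is open in $X_G$. This openness is exactly where niceness of $\theta$ enters, and it is the single hypothesis doing the work; without it $\iota(X)$ need not be open and neither (O1) nor the cover in (O2) would be available. The $\mathrm{LC}^n$ statement (b) is obtained word-for-word from (a) by replacing ``$V$ is contractible in $U$'' throughout by ``for every $0\le m\le n$ each continuous $f\colon S^m\to V$ extends to $F\colon B^{m+1}\to U$'', so no additional idea is required.
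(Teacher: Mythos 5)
Your proposal is correct and follows essentially the same route as the paper: both directions rest on $\iota$ being an open embedding (niceness via Proposition \ref{Kellen1}(iii)) and on transporting the local property along the homeomorphisms $\mu_g$, using the open cover $X_G=\bigcup_{g\in G}\mu_g(\iota(X))$. The paper merely writes out the explicit homotopy $F(p,t)=(\mu_g\circ\iota\circ H)(\iota^{-1}\mu_{g^{-1}}(p),t)$ instead of isolating your observations (O1) and (O2), so the content is the same.
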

\begin{proof} Let $z\in X_G$ and let  $U$ be a neighborhood of $z$. Take $(g, z)\in G\times X$ so that $z=[g,x]$ and consider the map $\iota$ defined in \eqref{iota}. Then $\mu_{g^{-1}}(U)$ is a neighborhood of $\iota(x)$ in $X_G$ and since $\iota$ is open, the set  $V=\iota(X)\cap \mu_{g^{-1}}(U)$ is an open neighborhood of $\iota(x)$ in $\iota(X).$ We shall use these sets in the proof of both items.\\    
 For (a), suppose that $X$ is locally contractible. Let $W=\iota^{-1}(V)$. Since $X$ is locally contractible, there exists $O$ an open neighborhood of $x$ in $X$ such that $O\subseteq W$ and $O$ is contractible in $W$. Let $H: O\times I\rightarrow W$ be a homotopy such that $H_0$ is the inclusion map and $H_1(o)=w_x \in W,$ for each $o\in O.$ 
Set    $$F:\mu_g(\iota(O))\times I\ni (p,t)\mapsto (\mu_g\circ \iota\circ H)(\iota^{-1}\mu_{g^{-1}}(p),t) \in \mu_g(\iota(W)).$$ 
    Then $F$ is continuous because is a composition of continuous maps. Observe that $\mu_g(\iota(W))\subseteq \mu_g(V)\subseteq U$, $F_0$ is the inclusion and $F_1(p)=[g,w_x]$ for each $p\in \mu_g(\iota(O))$. Then by composing $F$ with the inclusion of $\mu_g(\iota(W))\hookrightarrow U$ we obtain the desired function. 
    
    Conversely, suppose that $X_G$ is locally contractible. Since $\iota: X\to X_G$ is open and a homeomorphism onto $\iota(X)$ we have that $\iota(X)$ and thus $X$ is locally contractible.

For item (b), consider $0\leq m\leq n$. Then there exists an open  neighborhood $W$ of $\iota(x)$ in $\iota(X)$ so that $W\subseteq V$ and each continuous map $f:S^m\rightarrow W$ extends to a continuous map $F: B^{m+1}\rightarrow V$.  Observe that $z=[g,x]\in \mu_g(W)\subseteq \mu_g(V)\subseteq U$. Let $f: S^m\rightarrow \mu_g(W)$ be a continuous map. Since $\mu_{g^{-1}}\circ f: S^m\rightarrow W$ is continuous, there exists $\tilde{f}: B^{m+1}\rightarrow V$ a continuous extension of $\mu_{g^{-1}}\circ f$. Let $F=\mu_g\circ \tilde{f}: B^{m+1}\rightarrow U$ and  $a\in S^m$. Then $F(a)=\mu_g(\tilde{f}(a))=\mu_g(\mu_{g^{-1}}(f(a)))=f(a)$, and $F$ extends  $f$, from this we conclude that $X_G$ is $\rm{LC}^n$.
    \end{proof}

 Let $(X,\theta)$ be $G$-space and $K$ be a subgroup of $G$. The set of $K$-fixed points is 
 \begin{equation}\label{xh}
 	X[K]=\{x\in X: K\subseteq G_x\}.
 \end{equation} 
An element $w\in X[G]$ is called a {\it fixed point}. Further, if $(Y,\eta)$ is another $G$-space. Two continuous $G$-maps $m,n: X\rightarrow Y$ are {\it $G$-homotopic} provided that there is a $G$-map $H:X\times I\rightarrow Y$, such that $H_0=m$ and $H_1=n$, where $H_t: X\ni x\mapsto H(x,t)\in Y$, for each $t\in I$. We always consider $X\times I$ as a $G$-space with the diagonal partial action, where $G$ acts on $I$ trivially. The $G$-map $H$ is called a {\it $G$-homotopy} from $m$ to $n$. We say that   $X$ is {\it  $G$-contractible} if there exists a $G$-homotopy $H:X\times I\rightarrow X$ with $H_1={\rm id}_X$ and $H_0(x)=w,$ for each $x\in X$ and some  fixed point  $w\in X.$ 
\begin{ejem}\label{contra}
    Consider $G=\mathbb{R}^*$ as a multiplicative group. Then $G$ acts globally on  $\mathbb{R}^n$ via $\mu: G\times \mathbb{R}^n\ni (r,x)\mapsto rx\in \mathbb{R}^n.$ Let $X=\{x\in \mathbb{R}^n: ||x||<1\}$ and let $\theta$ be the restriction of $\mu$ to $X,$ 
    then follows by Proposition \ref{isog}  that $X_G$ is topologically equivalent to $\mathbb{R}^n,$  because $\mathbb{R}^n=\bigcup\limits_{r\in \mathbb{R}^*}\mu_r(X),$ in particular $X$ and  $X_G$ are contractible. Further,  we affirm that the map $H:X\times I\ni (x,t)\mapsto tx\in X$, 
    is a $G$-contraction to the fixed point $0$. Take $t\in I$ and $(r,x)\in G*X$, then $||rx||<1$ and $||rH_t(x)||=||t(rx)||=t||rx||<1$. Therefore $(r,H_t(x))\in G*X$ and $H_t(\theta_r(x))=trx=rtx=\theta_r(H_t(x))$  which implies that  $X$ is $G$-contractible. Observe that $X_G=\mathbb R^n$ is also $G$-contractible.
\end{ejem}
Regarding Example \ref{ex2} and Example \ref{contra}, it is natural to wonder if $X_G$ is $G$-contractible when $X$ is $G$-contractible. To answer this question we need first a couple of results.
\begin{lem}\label{homeocanoni}		Let $G$ be a topological group, $K$ be a subgroup of $G$ and $(Y,\theta)$ be a $K$-space, where $\theta$ is a trivial partial action, that is $\theta(k,x)=x$ for each $(k,x)\in K* X$. Then $G\times_KY$ and $Y$ are homeomorphic.
\end{lem}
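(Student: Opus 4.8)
The plan is to imitate the proof of Proposition~\ref{iota3}: exhibit a pair of mutually inverse continuous maps between $G\times_KY$ and $Y$ and conclude that they constitute a homeomorphism. The two natural candidates are the ``projection'' $\phi\colon G\times_KY\to Y$, and the canonical inclusion $\psi=\iota^\theta_K\colon Y\to G\times_KY$, $y\mapsto[e,y]_K$, from Proposition~\ref{iota2}.

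First I would define $\phi$ by $\phi([g,y]_K)=y$ and check that it is well defined: by \eqref{clase} every element of the class $[g,y]_K$ has the form $(gk\m,\theta(k,y))$ with $k\in K^y$, and since $\theta$ is trivial this equals $(gk\m,y)$; hence the second coordinate is constant along each $K$-orbit. Thus $\phi\circ p^\theta_K=\mathrm{pr}_Y$, where $\mathrm{pr}_Y\colon G\times Y\to Y$ is the (continuous) projection, which is therefore constant on the fibers of $p^\theta_K$. Because $p^\theta_K$ is an open, hence quotient, map, the Transgression Theorem \cite[Chapter VI, Theorem 3.2]{JD} produces a continuous $\phi$ fitting into the commutative triangle $\mathrm{pr}_Y=\phi\circ p^\theta_K$, exactly as in the proof of Proposition~\ref{iota2}(ii). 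Continuity of $\psi=\iota^\theta_K$ is already guaranteed by Proposition~\ref{iota2}, which moreover gives that $\psi$ is a homeomorphism onto its image $\iota^\theta_K(Y)$.

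It then remains to verify that $\phi$ and $\psi$ are mutually inverse. The identity $\phi\circ\psi=\mathrm{id}_Y$ is immediate, since $\phi(\iota^\theta_K(y))=\phi([e,y]_K)=y$. The reverse composite requires showing $\psi(\phi([g,y]_K))=[e,y]_K=[g,y]_K$ for every $(g,y)\in G\times Y$, i.e.\ that each class admits a representative with first coordinate $e$; equivalently, that $(g,y)$ and $(e,y)$ lie in the same $K$-orbit of $G\times Y$ under the diagonal action. This is precisely the point at which the triviality of $\theta$ must be exploited in order to absorb the $G$-coordinate $g$ and collapse $[g,y]_K$ onto $[e,y]_K$, so I expect establishing the surjectivity of $\iota^\theta_K$ to be the main obstacle of the argument. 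Once that coordinate collapse is in hand, $\psi\circ\phi=\mathrm{id}_{G\times_KY}$ follows, and $\phi$ is the desired homeomorphism with inverse $\psi=\iota^\theta_K$.
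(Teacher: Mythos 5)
Your construction of $\phi$ (well-definedness via \eqref{clase} and triviality of $\theta$, continuity via the quotient/Transgression argument) matches the paper's proof, which defines the same map $\delta([g,y]_K)=y$ and checks continuity by the same commutative triangle over $p^\theta_K$. The identity $\phi\circ\iota^\theta_K=\mathrm{id}_Y$ is likewise fine. The problem is the step you explicitly defer: you say that the triviality of $\theta$ ``must be exploited to absorb the $G$-coordinate $g$ and collapse $[g,y]_K$ onto $[e,y]_K$,'' and you expect this to be the main obstacle --- but you never carry it out, so as written the proof is incomplete precisely at its crux. Worse, the step cannot be carried out when $K$ is a proper subgroup of $G$: triviality of $\theta$ only controls the \emph{second} coordinate of the representatives in \eqref{clase}, while every element of $[e,y]_K$ has first coordinate of the form $k^{-1}$ with $k\in K^y\subseteq K$. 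Hence $(g,y)\in[e,y]_K$ forces $g\in K$, so for $g\in G\setminus K$ the classes $[g,y]_K$ and $[e,y]_K$ are distinct, $\iota^\theta_K$ is not surjective, and $\phi$ is not injective. Concretely, if the trivial action is global then $G\times_KY$ is just $(G/K)\times Y$, which is not homeomorphic to $Y$ unless $K=G$.

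You should be aware that the paper's own proof has exactly the same gap: it asserts without verification that $y\mapsto[e,y]_K$ inverts $\delta$. So your instinct to isolate this as the delicate point was sound, but the honest conclusion is that the lemma as stated only holds in the case $K=G$ (where $G\times_GY=Y_G$ and $g\in G=K^y$ does absorb into the class via $k=g$), and a correct general statement would be $G\times_KY\cong(G/K)\times Y$. If you want to salvage the argument, you must either restrict to $K=G$ or prove the surjectivity of $\iota^\theta_K$ under an additional hypothesis that forces $G=K\cdot K^y$ for all $y$; neither follows from triviality of $\theta$ alone.
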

\begin{proof}
	Consider the map $\delta: G\times_KY\ni [g,y]_K\mapsto y\in  Y$. Notice that $\delta$ is well-defined and continuous because the following diagram is commutative:
	\begin{center}
		$\xymatrix{G\times Y\ar[d]_-{p_K}\ar[r]^-{\text{proj}_2}& Y\\
			G\times_KY \ar[ur]_-{\delta}& 
		}$
	\end{center}
Moreover the inverse of $\delta$ is given by  $\delta\m: Y\ni y\mapsto[e,y]_K\in G\times_KY$, then $\delta$ is a homeomorphism.
\end{proof}
\begin{pro}\label{Ghomot}{\rm
	Let $G$ be a topological group, $K$ be a subgroup of $G$, $(X,\theta)$ and $(Y,\eta)$ be $K$-spaces. If two $K$-maps $m,n: X\rightarrow Y$ are $K$-homotopic, then $G\times_Km$ and $G\times_Kn$ are $G$-homotopic. In other words, the functor $G\times_K-:\mathcal{PA}_K\rightarrow \mathcal{A}_G$ preserves homotopies. In particular, the globalization functor $E:\mathcal{PA}_G\rightarrow \mathcal{A}_G$ preserves homotopies.}
\end{pro}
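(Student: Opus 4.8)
The plan is to apply the functor $G\times_K-$ to a chosen $K$-homotopy and then transport the result along the canonical identification $(G\times_K X)\times I\cong G\times_K(X\times I)$ that is furnished by Corollary \ref{equivalencia} together with Lemma \ref{homeocanoni}. First I would fix a $K$-homotopy $H\colon X\times I\to Y$ with $H_0=m$ and $H_1=n$, where $X\times I$ carries the diagonal partial action of $K$ and $K$ acts trivially on $I$; this diagonal structure is precisely what makes $H$ a $K$-map. Regarding $I$ as a $K$-space with the trivial global action, Corollary \ref{equivalencia} applied to the two-element family $\{X,I\}$ gives a $G$-homeomorphism $G\times_K(X\times I)\cong (G\times_K X)\times(G\times_K I)$, while Lemma \ref{homeocanoni} gives $G\times_K I\cong I$. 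Composing these, I obtain a homeomorphism
$$\Phi^{-1}\colon (G\times_K X)\times I\ni ([g,x]_K,s)\mapsto [g,(x,s)]_K\in G\times_K(X\times I),$$
which a direct computation shows is a $G$-map for the diagonal $G$-action that is trivial on $I$: indeed $g'\cdot([g,x]_K,s)=([g'g,x]_K,s)$ is sent to $[g'g,(x,s)]_K=g'\cdot[g,(x,s)]_K$.

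Next I would set $\widetilde H:=(G\times_K H)\circ\Phi^{-1}\colon (G\times_K X)\times I\to G\times_K Y$. Since $H$ is a $K$-map, $G\times_K H$ is a continuous $G$-map by the construction of the functor recalled before Theorem \ref{glof2}, so $\widetilde H$ is a composite of continuous $G$-maps and is therefore itself a continuous $G$-map. Unwinding the definitions gives the explicit formula $\widetilde H([g,x]_K,s)=[g,H(x,s)]_K$, whence evaluating at the endpoints yields $\widetilde H(\,\cdot\,,0)=G\times_K m$ and $\widetilde H(\,\cdot\,,1)=G\times_K n$, because $[g,H(x,0)]_K=[g,m(x)]_K$ and $[g,H(x,1)]_K=[g,n(x)]_K$. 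Thus $\widetilde H$ is the desired $G$-homotopy, proving that $G\times_K-$ preserves homotopies; the final assertion about $E$ follows by taking $K=G$, since $G\times_G-=E$.

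The step that requires the most care is the identification $\Phi^{-1}$: I must check that the $G$-space structure on $(G\times_K X)\times I$ appearing in the definition of $G$-homotopy, namely the diagonal action with $G$ acting trivially on $I$, is exactly the one for which the composite of the maps from Corollary \ref{equivalencia} and Lemma \ref{homeocanoni} is $G$-equivariant. If one prefers to avoid invoking the two auxiliary results, the same map $\widetilde H$ can be produced directly: the assignment $((g,x),s)\mapsto [g,H(x,s)]_K$ defines a continuous map on $(G\times X)\times I$ that is constant on the fibers of the open (hence quotient) map $p_K\times\mathrm{id}_I$, using the $K$-equivariance of $H$ together with \textrm{(PA1)} exactly as in the verification that $G\times_K f$ is well defined. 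Consequently it descends to the continuous $G$-map $\widetilde H$ by the Transgression Theorem, and the endpoint computation concludes the argument in the same way.
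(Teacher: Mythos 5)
Your argument is correct and follows essentially the same route as the paper's proof: both transport a chosen $K$-homotopy $H$ along the identification $(G\times_K X)\times I\cong G\times_K(X\times I)$ obtained from Corollary \ref{equivalencia} and Lemma \ref{homeocanoni}, apply the functor $G\times_K-$, and verify the endpoints via $[g,H(x,0)]_K=[g,m(x)]_K$ and $[g,H(x,1)]_K=[g,n(x)]_K$. Your map $\Phi^{-1}$ is exactly the paper's composite $\alpha^{-1}\circ(\mathrm{id}\times\beta)$, so the two arguments coincide.
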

\begin{proof}
	Let $F:X\times I\rightarrow Y$ be a $K$-homotopy from $m$ to $n.$ Consider the $G$-map $G\times_KF: G\times_K(X\times I)\rightarrow G\times_KY$ induced by the functor $G\times_K-$. By Corollary \ref{equivalencia} the map $\alpha: G\times_K(X\times I) \ni [g,(x,t)]_K\mapsto ([g,x]_K,[g,t]_K)\in (G\times_KX)\times(G\times_K I) $  is a $G$-homeomorphism. Moreover, since we are considering $I$  endowed with a trivial partial action, it follows from Lemma \ref{homeocanoni}  that the map $\beta: I\ni t\mapsto[e,t]_K\in G\times_KI$  is a $G$-homeomorphism. Then we can consider the $G$-map $$\tilde{F}:=(G\times_KF)\circ \alpha\m\circ (\text{id}\times\beta):(G\times_KX)\times I\rightarrow G\times_KY.$$
	
	Let us show that $\tilde{F}$ is a $G$-homotopy between $G\times_Km$ and $G\times_Kn.$ Indeed, take $[g,x]_K\in G\times_KX$. Then:
	
	\begin{equation*}
		\begin{aligned}
			\tilde{F}([g,x]_K,0) & =(G\times_KF)\circ \alpha\m([g,x]_K,[e,0]_K)\\
			& =(G\times_KF)\circ \alpha\m([g,x]_K,[g,0]_K)\\
			& =(G\times_KF)([g,(x,0)]_K)\\
			& =[g,F(x,0)]_K\\
			& = [g,m(x)]_K\\
			& = (G\times_Km)([g,x]_K),
		\end{aligned}
	\end{equation*}
then $\tilde{F}_0=G\times_Km$. In a similar way $\tilde{F}_1=G\times_Kn$. This completes de proof.
\end{proof}
\begin{teo}\label{G-con}{\rm
    Let $X$ be a topological space with a partial action $\theta$ of a topological group $G$. If $X$ is $G$-contractible, then $X_G$ is $G$-contractible.}
\end{teo}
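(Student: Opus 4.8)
The plan is to deduce this directly from the homotopy-preservation property of the enveloping functor, namely Proposition \ref{Ghomot}, specialized to the case $K=G$ (so that $G\times_G- = E$). In other words, the statement should follow almost formally once we feed a $G$-contracting homotopy into that functor and read off what happens to its two endpoints.

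First I would unpack the hypothesis. By definition, $G$-contractibility of $X$ provides a $G$-homotopy $H\colon X\times I\to X$ with $H_1=\mathrm{id}_X$ and $H_0(x)=w$ for all $x$, where $w\in X[G]$ is a fixed point. Regarding $H$ as a $G$-homotopy between the two $G$-maps $m:=H_0$ and $n:=H_1$, Proposition \ref{Ghomot} (with $K=G$) yields a $G$-homotopy $\tilde H$ on $X_G$ between $E(m)$ and $E(n)$.

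The next step is to identify the two endpoint maps. Since $E$ is a functor, $E(n)=E(\mathrm{id}_X)=\mathrm{id}_{X_G}$. For the other endpoint, $E(m)$ is the map $[g,x]\mapsto[g,w]$. Here the one genuine computation is that, because $w$ is a fixed point — so $(g,w)\in G*X$ and $\theta(g,w)=w$ for every $g\in G$ — the relation $R$ in \eqref{eqgl} forces $[g,w]=[e,w]$ in $X_G$ for all $g$; hence $E(m)$ is the constant map at the single point $[e,w]$. It then remains only to check that $[e,w]$ is a fixed point of the enveloping action, which is immediate: $\mu(g',[e,w])=[g'\!,w]=[e,w]$.

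Assembling these facts, $\tilde H$ is a $G$-homotopy with $\tilde H_1=\mathrm{id}_{X_G}$ and $\tilde H_0$ the constant map at the fixed point $[e,w]$, which is precisely a $G$-contraction of $X_G$. The essential content is already packaged in Proposition \ref{Ghomot}, so I do not expect a serious obstacle; the only verifications are the collapse $[g,w]=[e,w]$ and the fixed-point property of $[e,w]$, both of which are routine bookkeeping with the equivalence classes defining $X_G$.
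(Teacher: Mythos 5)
Your proposal is correct and follows essentially the same route as the paper: apply Proposition \ref{Ghomot} (with $K=G$) to the contracting $G$-homotopy, note that the identity endpoint goes to $\mathrm{id}_{X_G}$, and compute $[g,w]=[e,\theta(g,w)]=[e,w]$ to see that the other endpoint is constant at the fixed point $[e,w]$. The only difference is that you additionally verify explicitly that $[e,w]$ is fixed under the enveloping action, which the paper leaves implicit.
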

\begin{proof}
	Let $w\in X$ be a fixed point and $F:X\times I\rightarrow X$ be a $G$-homotopy such that $F_0=\text{id}_X$ and $F_1(x)=w$, for each $x\in X$. By Theorem \ref{Ghomot} the map $\tilde{F}: X_G\times I\rightarrow X_G$ is a $G$-homotopy from $G\times_G \text{id}_X$ to $G\times_G F_1$. But $(G\times_G F_1)([g,x])=[g,F_1(x)]=[g,w]=[e,\theta(g,w)]=[e,w]$. Then $G\times_G F_1$ is constant to fixed point $[e,w]$. We conclude that $X_G$ is $G$-contractible.
\end{proof}
\begin{ejem}
    Let $G=\rm{GL}(2;\mathbb{R})$ with the partial action on $\mathbb{R}$ given in Example \ref{matriz}. Observe that $\mathbb{R}_G$ is not $G$-contractible because it is not even contractible. This can also be deduced from the fact that $\mathbb{R}_G$ does not have  $G$-fixed points. For this reason, we conclude that the hypothesis of $G$-contractibility in $X$ cannot be deleted in Theorem \ref{G-con}.
\end{ejem}

We now turn our attention to the local $G$-contractibility. Let $(X,\theta)$ be a $G$-space and $K$ be a subgroup of $G$. A nonempty subset $S$ of $X$ is said to be {\it $K$-invariant} if $\theta(k,s)\in S$ for each $k\in K$ and $s\in S$ such that $(k,s)\in G*X$. If $S$ is $G$-invariant, then we say that $S$ is invariant.

A $G$-space $(X,\theta)$ is said to be {\it locally $G$-contractible} if for each $x\in X$ and $U$ a   $G_x$-invariant  neighborhood of $x$,  there exists a   $G_x$-invariant neighborhood of $V$ of $x$ such that $V\subset U$ and $V$ is $G_x$-contractible in $U$. 

 For the reader's convenience, we recall the next.

\begin{lem}\label{invariant}{\rm
    Let $X$ be a space with a global action of a compact group $G$. If $V$ is a neighborhood of an invariant subset $A\subseteq X$, then an invariant neighborhood $V$ of $A$ exists so that $V\subseteq U$.}
\end{lem}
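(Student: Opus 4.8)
The statement as printed contains a clerical slip: the given neighborhood should be called $U$ and the constructed invariant neighborhood $V$, the intended conclusion being $A\subseteq V\subseteq U$. The plan is to produce $V$ as the largest invariant subset of $U$, namely its \emph{core}
\[
V=\{x\in X: G\cdot x\subseteq U\}=\bigcap_{g\in G} g\m U.
\]
Since a neighborhood $U$ of $A$ contains $A$ in its interior, I would first replace $U$ by $\operatorname{int}(U)$ and thereby assume $U$ open; this does not affect the conclusion, as any invariant open set inside $\operatorname{int}(U)$ lies inside the original $U$.

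The elementary properties are then checked directly. For invariance, if $x\in V$ and $h\in G$, then for every $g\in G$ one has $g\cdot(h\cdot x)=(gh)\cdot x\in U$ because $gh\in G$, so $h\cdot x\in V$. The inclusion $A\subseteq V$ holds because $A$ is invariant: for $a\in A$ and $g\in G$ we get $g\cdot a\in A\subseteq U$, whence $a\in V$. Finally $V\subseteq U$ follows by taking $g=e$, since $e\cdot x=x\in U$ for $x\in V$.

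The only substantial point, and the place where compactness of $G$ is indispensable, is the openness of $V$ (an arbitrary intersection of the open translates $g\m U$ need not be open). Here I would invoke the closed-projection (Kuratowski tube) lemma: since $G$ is compact, the projection $\operatorname{pr}_X\colon G\times X\to X$ is a closed map. Writing $\alpha\colon G\times X\ni(g,x)\mapsto g\cdot x\in X$ for the (continuous) action and noting that $X\setminus U$ is closed, the set $\alpha^{-1}(X\setminus U)$ is closed in $G\times X$, and one computes
\[
X\setminus V=\{x\in X:\exists\,g\in G,\ g\cdot x\notin U\}=\operatorname{pr}_X\big(\alpha^{-1}(X\setminus U)\big).
\]
Being the image of a closed set under a closed map, $X\setminus V$ is closed, hence $V$ is open. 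Thus $V$ is an invariant open neighborhood of $A$ contained in $U$, as required. The main obstacle is precisely this openness step; everything else is formal, and the compactness of $G$ enters only through the closedness of the projection along the compact factor $G$.
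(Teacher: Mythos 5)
Your proof is correct. The paper itself states this lemma without proof (it is recalled as a known fact from the theory of compact transformation groups, cf.\ Bredon), so there is nothing to compare against; your argument --- taking the core $V=\bigcap_{g\in G}g\m U$ of the interior of $U$ and deducing its openness from the closedness of the projection $G\times X\to X$ along the compact factor --- is exactly the standard proof, and you are right that the statement's wording has a slip ($U$ and $V$ swapped in the hypothesis).
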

\begin{teo}\label{locallyc}{\rm
    Let $G$ be a compact group, $(X,\theta)$ be a $G$-space, and let $X_G$ be its enveloping space. If $\theta$ is nice, then $X$ is locally $G$-contractible if and only if $X_G$ locally $G$-contractible.}
\end{teo}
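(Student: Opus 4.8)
The plan is to use that, since $\theta$ is nice, the map $\iota$ of \eqref{iota} is an open embedding onto the open subset $\iota(X)\subseteq X_G$ (Proposition \ref{Kellen1}), that $\iota$ is $G_x$-isovariant with $G_{\iota(x)}=G_x$ and $\iota(x)$ a $G_x$-fixed point (Proposition \ref{iota2}), and that $G\cdot\iota(X)=X_G$. The single genuine difficulty throughout is that $\iota(X)$ is \emph{not} invariant, so a $G_x$-invariant neighborhood in $X$ and one in $X_G$ do not correspond under $\iota$; reconciling the two notions is where all the work lies.

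For the forward implication I would first reduce to the case $z=\iota(x)$. Writing $z=\mu_g(\iota(x))=[g,x]$, one has $G_z=gG_xg^{-1}$, and conjugation by the homeomorphism $\mu_g$ carries $G_x$-invariant neighborhoods and $G_x$-contractions at $\iota(x)$ to $G_z$-invariant neighborhoods and $G_z$-contractions at $z$; so it suffices to work near $\iota(x)$ with $K:=G_x$. Given a $K$-invariant neighborhood $U$ of $\iota(x)$, the set $\iota^{-1}(U)$ is a $K$-invariant (for the partial action) neighborhood of $x$, since $(k,s)\in G*X$ with $\iota(s)\in U$ gives $\iota(\theta_k(s))=\mu_k(\iota(s))\in U$. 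The hypothesis then yields a $K$-invariant $\tilde V\subseteq\iota^{-1}(U)$ and a $K$-contraction $H\colon\tilde V\times I\to\iota^{-1}(U)$ with $H_1$ the inclusion and $H_0\equiv w$ a $K$-fixed point. I would set $V:=K\cdot\iota(\tilde V)=\bigcup_{k\in K}\mu_k(\iota(\tilde V))$, a $K$-invariant open neighborhood of $\iota(x)$ inside $U$, and define $\hat H(\mu_k(\iota(v)),t)=\mu_k(\iota(H(v,t)))$. Well-definedness rests on $H$ being a partial $K$-map: if $\mu_k(\iota(v))=\mu_{k'}(\iota(v'))$ then $\ell=(k')^{-1}k\in K$ satisfies $(\ell,v)\in G*X$ and $\theta_\ell(v)=v'$, so $H(v',t)=\theta_\ell(H(v,t))$ and the two values coincide. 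Continuity follows because the $K$-action map $K\times X_G\to X_G$ is open (via the homeomorphism $(k,p)\mapsto(k,\mu_k(p))$), so $K\times\iota(\tilde V)\to V$ is an open quotient and hence so is its product with $\mathrm{id}_I$; the continuous map $(k,\iota(v),t)\mapsto\mu_k(\iota(H(v,t)))$ is constant on the fibers, so it descends to a continuous $\hat H$, which one checks is a $K$-contraction ending at the $K$-fixed point $\iota(w)$.

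For the reverse implication, let $U$ be a $G_x$-invariant neighborhood of $x$ in $X$ and put $K:=G_x$. Here compactness enters decisively: $\{\iota(x)\}$ is a $K$-invariant subset of $X_G$ and $\iota(U)$ is a neighborhood of it, so Lemma \ref{invariant}, applied to the compact group $K$ acting globally on $X_G$, produces a $K$-invariant neighborhood $\hat U\subseteq\iota(U)\subseteq\iota(X)$ of $\iota(x)$. Applying the hypothesis at $\iota(x)$ gives a $K$-invariant $\hat V\subseteq\hat U$ and a $K$-contraction $\hat H\colon\hat V\times I\to\hat U$. Since $\hat U\subseteq\iota(X)$, everything pulls back: $V:=\iota^{-1}(\hat V)\subseteq U$ is $K$-invariant for the partial action, and $H:=\iota^{-1}\circ\hat H\circ(\iota\times\mathrm{id}_I)\colon V\times I\to U$ is a continuous contraction. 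That $H$ is a partial $K$-map follows from $\mu_k(\iota(H(v,t)))=\hat H(\iota(\theta_k(v)),t)=\iota(H(\theta_k(v),t))\in\iota(X)$, which by Proposition \ref{Kellen1} gives $(k,H(v,t))\in G*X$ and the required equivariance; the endpoints and the fixed point $w=\iota^{-1}(\hat w)$ transfer as before.

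The main obstacle is exactly the failure of $\iota(X)$ to be invariant. In the forward direction it is overcome by saturating the slice neighborhood to its $K$-orbit and extending the contraction equivariantly, the delicate points being well-definedness (resting on the partial-action equivariance of $H$) and continuity (resting on openness of the action map); notably this direction needs no compactness. In the reverse direction it is overcome by first manufacturing a genuinely $K$-invariant neighborhood lying inside $\iota(X)$, and this averaging step is where compactness is indispensable. I would also record that this step tacitly requires the isotropy $K=G_x$ to be a compact subgroup of $G$, which should either be added to the hypotheses or deduced from the standing assumptions.
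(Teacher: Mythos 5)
Your proposal is correct, and the reverse implication is essentially the paper's argument: use Lemma \ref{invariant} to manufacture a $G_{\iota(x)}$-invariant neighborhood inside $\iota(U)\subseteq\iota(X)$, apply the hypothesis there, and pull everything back through the open embedding $\iota$. The forward implication, however, takes a genuinely different route. The paper transports the contraction $H$ on $O\subseteq X$ to $\mu_g(\iota(O))\subseteq X_G$ and then, because this set fails to be $G_z$-invariant, invokes compactness (Lemma \ref{invariant} again) to \emph{shrink} to a $G_z$-invariant neighborhood $P\subseteq\mu_g(\iota(O))$ on which the transported homotopy restricts to a $G_z$-contraction. You instead \emph{saturate}: you take $V=K\cdot\iota(\tilde V)$ and extend the contraction equivariantly by $\hat H(\mu_k(\iota(v)),t)=\mu_k(\iota(H(v,t)))$, with well-definedness resting on the partial $K$-equivariance of $H$ (via the relation \eqref{eqgl}) and continuity on the openness of the action map, exactly the transgression technique used elsewhere in the paper. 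Your version buys something real: the implication ``$X$ locally $G$-contractible $\Rightarrow$ $X_G$ locally $G$-contractible'' holds without compactness of $G$, whereas the paper's proof consumes compactness in both directions. The cost is the extra well-definedness and quotient-continuity verification (and, strictly, one should pass to the interior of $\tilde V$, which remains $G_x$-invariant since the domains $X_g$ are open, so that the saturation argument applies to an open set). Your closing remark is also well taken and applies equally to the paper's own proof: Lemma \ref{invariant} is invoked for the subgroup $G_z$ (resp. $G_{\iota(x)}$) acting on $X_G$, which tacitly requires these isotropy groups to be closed, hence compact, in $G$; this holds if $X_G$ is $T_1$ but is not guaranteed by the stated hypotheses.
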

\begin{proof} $\Rightarrow) $ Let $z\in X_G$ and $U$ be a $G_z$-invariant neighborhood of $z$. Take $g\in G$ and $x\in X$ so that $z=[g,x]$. Then $G_z=G_{\mu_g(\iota(x))}=gG_{\iota(x)}g^{-1}$ and $G_{\iota(x)}\mu_{g^{-1}}(U)\subseteq \mu_{g^{-1}}(U)$, therefore $\mu_{g^{-1}}(U)$ is a $G_{\iota(x)}$-invariant neighborhood of $\iota(x)$ in $X_G$. Further, the set $V=\iota(X)\cap \mu_{g^{-1}}(U)$ is an open neighborhood of $\iota(x)$ in $\iota(X)$ and $W=\iota^{-1}(V)$ is a $G_x$-invariant neighborhood of $x$ in $X$. Indeed, to check this last statement take  $h\in G_x$ and $w\in W$ so that $(h,w)\in G*X$. Then
\begin{center}
   $\iota(\theta_h(w))=[1,\theta_h(w)]=[h,w]=\mu_h(\iota(w))\in \mu_h(\mu_{g^{-1}}(U))\subseteq \mu_{g^{-1}}(U)$,
    \end{center}
    therefore we deduce that $\iota(\theta_h(w))\in \iota(X)\cap\mu_{g^{-1}}(U)=V$ and $\theta_h(w)\in W$, therefore $W$ is $G_x$-invariant.    Since $X$ is locally $G$-contractible, there exists a $G_x$-invariant neighborhood $O$  of $x$ in $X$ such that $O\subseteq W$ and $O$ is $G_x$-contractible in $W$. Let $H: O\times I\rightarrow W$ be a $G_x$-homotopy, where $H_0$ be the inclusion map and $H_1:O\ni o\mapsto w_x\in W$, for some $w_x\in W$ a $G_x$-fixed point. 
 Observe that $z\in \mu_g(\iota(O))\subseteq \mu_g(\iota(W))\subseteq U$. Since $G$ is compact and $\{z\}$ is $G_z$-invariant, then  Lemma \ref{invariant} implies that there exist a $G_z$-invariant neighborhood $P$ of $z$ in $X_G$ for which $z\in P\subseteq \mu_g(\iota(O))$.
    Consider the continuous map $$F:P\times I\ni(p,t)\mapsto (\mu_g\circ \iota\circ H)(\iota^{-1}\mu_{g^{-1}}(p),t)\in U,$$
 which is  a $G_z$-homotopy. Indeed, take $t\in I$, we shall check that $F_t:P\rightarrow U$ is a $G_z$-map. Given  $p\in P$ and $r\in G_z$, we need to verify that $\mu_r(F_t(p))=F_t(\mu_r(p))$. Take $o\in O$ and $n\in G_{\iota(x)}$ so that $p=\mu_g(\iota(o))=[g,o]$ and $r=gng^{-1}$. Observe that $F_t(p)=[g,H_t(o)]$ and $\mu_r(F_t(p))=[gn,H_t(o)]$. Since $P$ is $G_z$-invariant, there exists $s\in O$ so that $\mu_r(p)=\mu_g(\iota(s))=[g,s]$, then $F_t(\mu_r(p))=[g,H_t(s)]$. But \begin{center}
        $[g,s]=\mu_r(p)=\mu_{gng^{-1}}([g,o])=[gn,o]$,
    \end{center}
    then by \eqref{eqgl} we get that  $o\in X_{n^{-1}}$ and $\theta_n(o)=s$. Since $H$ is a $G_x$-homotopy we have $H_t(o)\in X_{n^{-1}}$ and $\theta_n(H_t(o))=H_t(\theta_n(o))=H_t(s)$, this shows that:
    \begin{center}
        $\mu_r(F_t(p))=[gn,H_t(o)]=[g,H_t(s)]=F_t(\mu_r(p))$,
    \end{center}
    then $F_t$ is $G_z$-map.  Finally, observe that $F_0$ is the inclusion map and $F_1(p)=[g,w_x]$ for each $p\in P$. This ends the proof.

    $\Leftarrow)$ Let $x\in X$ and let $U$ be a $G_x$-invariant neighborhood of $x$ in $X$. Since  $\iota(U)$ is a neighborhood of $\iota(x)$ and $\{\iota(x)\}$ is $G_{\iota(x)}$-invariant, by Lemma \ref{invariant} and the hypothesis there are $G_{\iota(x)}$-invariant neighborhoods $V$ and $W$  of $\iota(x)$ such that $W\subseteq V\subseteq \iota(U)$, and $W$ is $G_{\iota(x)}$-contractible in $V$. Let $H:W\times I\rightarrow V$ be a $G_{\iota(x)}$-contraction, $H_0$ is the inclusion map and $H_1:W\ni w\mapsto v_x\in V,$ for some $v_x\in V$ a $G_{\iota(x)}$-fixed point. Observe that $P=\iota^{-1}(W)\subseteq U$ is a $G_x$-invariant neighborhood of $x$ in $X$. Define $$F:P\times I\ni (p,t)\mapsto \iota^{-1}(H(\iota(p),t))\in  U.$$ If $p\in P,$  we have
$F_0(p)=\iota^{-1}(H(\iota(p),0))=p$, and $F_1(p)=\iota^{-1}(H(\iota(p),1))=\iota^{-1}(v_x)$,        
   which say that $F_0$ is the inclusion map and $F_1$ is  constant with value the $G_x$-fixed point $\iota^{-1}(v_x)\in U$. It only remains to verify that $F$ is a $ G_x$-map.
    Let $t\in I$, $g\in G_x$ and $p\in G$ so that $(g,x)\in G*X$. We will show that $F_t(\theta_g(p))=\theta_g(F_t(p))$. Pick $m,n\in U$ so that $H_t(\iota(p))=\iota(n)=[1,n]$ and $\mu_g(H_t(\iota(p)))=\iota(m)=[1,m]$, then $(g,n)\in G*X,$ $\theta_g(n)=m,$ and
    \begin{center}
        $F_t(\theta_g(p))=\iota^{-1}(H_t(\iota(\theta_g(p)))=\iota^{-1}(H_t(\mu_g(\iota(p)))=\iota^{-1}(\mu_g(H_t(\iota(p)))=m$,
    \end{center}
    and 
     \begin{center}
        $\theta_g(F_t(p))=\theta_g(\iota^{-1}(H_t(\iota(p)))=\theta_g(n)=m$,
    \end{center}
    this ends the proof.
    \end{proof}


\subsection{Globalization of partial actions and ANE's}\label{gane}
Let $X$ be a metrizable space equipped with a global action of a topological group $G$. Recall that $X$ is called an {\it equivariant absolute neighborhood extensor} (resp. an {\it equivariant absolute extensor}) denoted by $G$-ANE (resp. by $G$-AE) provided that for every metrizable space $Y$ equipped with a global action of $G$ and for every closed $G$-subset $A$ of $Y$, every $G$-map $f:A\rightarrow X$ extends to a $G$-map $F: U\rightarrow X$, for some invariant neighborhood $U$ of $A$ in $Y$ (or $U=Y$). We refer to \cite{A2}, \cite{A3}, \cite{A} for the equivariant theory of retracts.  When $G$ is  trivial, in the previous definitions we obtain the classical notion of an ANE (resp. an AE), (see \cite{SH}).
Given a metrizable $G$-space $(X,\theta)$, we   shall  establish sufficient conditions for the enveloping space $X_G$ to be an ANE or a $G$-ANE space. A characterization using homotopic properties is presented in \cite[Theorem 4.4]{AAS}.
We recall  another very important characterization below.
\begin{teo}\rm{\cite[Theorem 1]{YS} \cite[Theorem 4.2]{JJ2} \cite[Theorem 1.6]{AAMV} 
	Let $G$ be a compact Lie group and $(X,\theta)$ be a metrizable $G$-space with a finite structure, where $\theta$ is global. Then $X$ is a $G$-ANE (resp. a $G$-AE), if and only if, $X[K]$ is an ANE (resp. an AE) for each closed subgroup $K$ of $G$.}
\end{teo}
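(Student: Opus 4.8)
The plan is to establish the two implications separately, treating the nontrivial (backward) direction by an induction on orbit types that rests on the Slice Theorem.

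\emph{Forward direction.} Suppose $X$ is a $G$-ANE and fix a closed subgroup $K\le G$; I want to show that the fixed-point set $X[K]$ is an ordinary ANE. Let $Z$ be metrizable, $A\subseteq Z$ closed, and $f\colon A\to X[K]$ continuous. I would regard $G/K\times Z$ as a $G$-space, with $G$ acting by left translation on $G/K$ and trivially on $Z$, so that $G/K\times A$ is a closed invariant subset. Because $f(a)\in X[K]$, the assignment $(gK,a)\mapsto \theta(g,f(a))$ is a well-defined $G$-map $\varphi\colon G/K\times A\to X$ (if $g'K=gK$ then $g'=gk$ and $\theta(g',f(a))=\theta(g,\theta(k,f(a)))=\theta(g,f(a))$). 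Since $X$ is a $G$-ANE, $\varphi$ extends to a $G$-map $\Phi\colon W\to X$ on an invariant neighborhood $W$ of $G/K\times A$. Setting $U=\{z\in Z:(eK,z)\in W\}$ and $F(z)=\Phi(eK,z)$ gives a continuous extension of $f$ over the neighborhood $U\supseteq A$; moreover $F$ lands in $X[K]$ because $(eK,z)$ is $K$-fixed and $\Phi$ is equivariant, so $k\cdot F(z)=\Phi(kK,z)=\Phi(eK,z)=F(z)$. Hence $X[K]$ is an ANE, and the $G$-AE case is identical with $W=G/K\times Z$.

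\emph{Backward direction.} Assume $X[K]$ is an ANE for every closed subgroup $K\le G$; this is the substantial part. Since $G$ is a compact Lie group and $X$ has finitely many orbit types, I would argue by induction on the number of orbit types. The Slice Theorem \cite{GB} covers $X$ by invariant open tubes, each $G$-homeomorphic to a twisted product $G\times_H S$ where $H=G_x$ is an isotropy group and $S$ is an $H$-slice. On such a tube the adjunction between $G\times_H-$ and $\mathrm{res}^G_H$ (Theorem \ref{glof2}), together with the fact that $G/H$ is a smooth manifold, reduces the $G$-extension problem to an $H$-extension problem over $S$, so that $G\times_H S$ is a $G$-ANE precisely when $S$ is an $H$-ANE. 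The fixed-point sets of the slice satisfy $S[L]=S\cap X[L]$ for closed $L\le H$ (on the slice $G_s\le H$, so the $H$- and $G$-isotropy conditions agree), so the hypothesis on $X$ supplies exactly the fixed-point ANE conditions needed to apply the inductive hypothesis to $S$, whose orbit-type structure near the minimal stratum is strictly simpler; this yields that each tube is a $G$-ANE.

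\emph{Patching.} It remains to pass from local to global, which I expect to be the main obstacle. For this I would invoke the equivariant analogue of Hanner's theorem: a metrizable $G$-space that is locally a $G$-ANE is itself a $G$-ANE. Covering $X$ by the finitely many types of tubes produced above and patching equivariant partial extensions by an invariant partition of unity --- available because $G$ is compact, using Lemma \ref{invariant} to replace neighborhoods by invariant ones --- yields a single $G$-extension on an invariant neighborhood, completing the proof. The two delicate points are thus the equivariant local-to-global (Hanner) principle in the metrizable category, and the bookkeeping of the inductive step ensuring that the ANE hypotheses for all closed $K\le G$ feed correctly into each slice representation; compactness of the Lie group $G$ is used throughout, both to guarantee the Slice Theorem and to produce invariant neighborhoods and invariant partitions of unity.
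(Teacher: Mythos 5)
This theorem is not proved in the paper: it is quoted from the literature with three citations (Smirnov, Jaworowski, Antonyan et al.), so there is no in-text argument to measure yours against; I can only assess your attempt against the known proofs. Your forward direction is correct and complete: inducing the $G$-map $(gK,a)\mapsto\theta(g,f(a))$ on $G/K\times Z$, extending it equivariantly, restricting to $\{eK\}\times Z$, and checking equivariance forces the values into $X[K]$ is exactly the standard argument.

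The backward direction, which is the substance of the theorem, has genuine gaps. First, the adjunction you invoke (Theorem \ref{glof2}, $G\times_H-$ left adjoint to $\mathrm{res}^G_H$) governs $G$-maps \emph{out of} a twisted product, whereas the extension problem on a tube concerns $G$-maps \emph{into} $G\times_H S$; the correct reduction uses that a $G$-map into $G\times_H S$ lying over $G/H$ is determined by an $H$-map of the fibre over $eH$ into $S$, and one must first extend the composite map to $G/H$ (using that $G/H$ is itself a $G$-ANE). Second, the induction does not close as stated: if $H=G_x$ is an isotropy group of maximal type, every nearby isotropy group is subconjugate to $H$ and already occurs as an $H$-isotropy group on the slice $S$, so $S$ generally carries \emph{all} the orbit types of the tube and is not ``strictly simpler''; Jaworowski's actual induction instead peels off the closed stratum $X_{(H)}$ of a maximal orbit type (where the hypothesis on $X[H]$ enters, via the correspondence between $G$-maps into $G\cdot X[H]$ and maps into $X[H]$) and applies the inductive hypothesis to the complementary invariant open set, which really does have one fewer orbit type. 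Third, your claim that $S[L]=S\cap X[L]$ inherits the ANE property from $X[L]$ is not immediate, since $S$ is closed, not open, in its tube, so $S\cap X[L]$ is not an open subset of the ANE $X[L]$. Finally, the equivariant Hanner local-to-global principle you lean on at the end is itself a nontrivial theorem in the equivariant metrizable category; assuming it leaves the largest single step unaccounted for. The overall architecture (slices, induction on orbit types, local-to-global patching) is the right one, but as written the inductive step would fail and the key reductions are asserted rather than established.
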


\begin{ejem}\cite[Example 3.4]{MPR}. Consider the partial action $\theta= (X_n,\theta_n)_{n\in \mathbb{Z}_4}$, of the discrete group $\mathbb{Z}_4$ on $X={S}^1$,  given by  $\theta_0={\rm Id}_{{  S}^1}$; $\theta_1:X_3\to X_1$ by $\theta_1(e^{it})=e^{i(t+\pi)}$;  $\theta_3=\theta_1^{-1}$, $\theta_2={\rm Id}_{X_2},$ where the sets $X_i$ are illustrated as follows:
	\begin{center}
		\begin{tikzpicture}[scale=1.5]
			\draw[green!50!black] (0.7071,-0.7071) arc (-45:45:1) (1,0) node[anchor=west]{$X_1$};
			\draw[blue!50!black] (-0.7071,0.7071) arc (135:225:1) (-1.5,0) node[anchor=west]{$X_3$};
			\draw[red!50!black] (0.7071,0.7071) arc (45:135:1)  (-0.7071,-0.7071) arc (225:315:1) (-0.2,1.2) node[anchor=west]{$X_2$};
			\draw (0.7071,-0.7071) circle (1pt) (0.7071,0.7071) circle (1pt) (-0.7071,0.7071) circle (1pt) (-0.7071,-0.7071) circle (1pt);
		\end{tikzpicture}
	\end{center}
Let $H=\{0,2\}$ and $K=\{0\}$ be subgroups of $\mathbb{Z}_4$. Observe that $X[H]=X_2$ is open (then is an ANE), and $X[K]=X$ is an ANE.
\end{ejem}

We want to use the characterization of $G$-ANE's established in \cite[Proposition 3.2]{JJ}. For this, one needs to relate $\dim X$ and $\dim X_G$  when these dimensions make sense. Motivated by Example \ref{matriz} where $\rm{dim}\ $$X=\rm{dim}\ $$X_G=1$, we have the following lemma.

\begin{lem}\label{dimen}   

	Let $(X,\theta)$ be a metrizable separable $G$-space whose enveloping space $X_G$ is metrizable and separable. If $\theta$ is nice, then $\dim X=\dim X_G$.
\end{lem}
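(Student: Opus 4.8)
The plan is to exploit the fact, established in Proposition \ref{Kellen1}(iii), that when $\theta$ is nice the map $\iota\colon X\to X_G$ is an open embedding, so that $\iota(X)$ is an open subspace of $X_G$ which is homeomorphic to $X$. Thus $\dim X=\dim\iota(X)$, and the whole problem reduces to comparing $\dim\iota(X)$ with $\dim X_G$. Because $X_G$ is metrizable and separable, I can work with the covering dimension $\dim$, for which the classical dimension theory of separable metrizable spaces is available; in particular I intend to use two standard facts: (1) the subspace (monotonicity) theorem, $\dim A\le\dim Z$ for any subspace $A$ of a separable metrizable space $Z$, and (2) the countable closed (or countable open) sum theorem, $\dim Z=\sup_i\dim Z_i$ when $Z=\bigcup_{i}Z_i$ is a countable union of closed (respectively nice) pieces.

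First I would obtain the inequality $\dim\iota(X)\le\dim X_G$ immediately from monotonicity, giving $\dim X\le\dim X_G$. The substance is the reverse inequality. Here I would use the identity $X_G=G\cdot\iota(X)=\bigcup_{g\in G}\mu_g(\iota(X))$ from \eqref{iota}, together with the fact that each translate $\mu_g(\iota(X))$ is open in $X_G$ (since $\iota(X)$ is open and $\mu_g$ is a homeomorphism of $X_G$) and is homeomorphic to $\iota(X)$, hence satisfies $\dim\mu_g(\iota(X))=\dim\iota(X)=\dim X$. The separability of $X_G$ is what lets me extract from this open cover $\{\mu_g(\iota(X))\}_{g\in G}$ a countable subcover $\{\mu_{g_n}(\iota(X))\}_{n\in\N}$, since a separable metrizable space is Lindel\"of. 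Applying the sum theorem to this countable open cover then yields $\dim X_G=\sup_n\dim\mu_{g_n}(\iota(X))=\dim X$, which is the desired equality.

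The main obstacle is a technical one about which version of the sum theorem applies: the countable sum theorem in dimension theory is usually stated for closed covers, whereas $\{\mu_{g_n}(\iota(X))\}$ is an open cover. I would handle this either by invoking the countable open sum theorem directly (valid in separable metrizable spaces), or by refining the countable open cover to a countable closed cover: in a separable metrizable (hence normal, second countable) space one can shrink any countable open cover to a countable closed cover $\{F_n\}$ with $F_n\subseteq\mu_{g_n}(\iota(X))$, and then $\dim F_n\le\dim\mu_{g_n}(\iota(X))=\dim X$ by monotonicity, so the closed sum theorem gives $\dim X_G=\sup_n\dim F_n\le\dim X$. Either route closes the argument; the only care needed is to confirm that the hypotheses (metrizability and separability of both $X$ and $X_G$, and niceness of $\theta$ so that $\iota$ is an open embedding) are exactly what make every translate open and homeomorphic to $X$ and make $X_G$ Lindel\"of. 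Combining the two inequalities gives $\dim X=\dim X_G$.
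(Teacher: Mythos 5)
Your proposal is correct and follows essentially the same route as the paper: monotonicity gives $\dim X\le\dim X_G$ via the open embedding $\iota$, Lindel\"ofness of the separable metrizable space $X_G$ extracts a countable subcover from $\{\mu_g(\iota(X))\}_{g\in G}$, and the countable sum theorem closes the argument. The paper handles the open-versus-closed technicality by observing that each open translate is $F_\sigma$ in the metrizable space $X_G$, which is just a variant of your shrinking argument.
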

\begin{proof}	 We only need  to show that $\dim X_G\leq \dim X$. Suppose $\dim X<\infty$. Since $X_G$ is Lindelöf, there exist a   subset $\{g_n: n\in \mathbb{N}\}$ of $G$ such  that $X_G=\bigcup\limits_{i=1}^{\infty} \mu_{g_i}(\iota(X))$. Observe that $\mu_g(\iota(X))$ is $F_\sigma$ in $X_G$ for each $g\in G$. Then we are done.	\end{proof}


\begin{teo}\label{comp}{\rm
 Let $G$ be a compact Lie group and $(X,\theta)$ be a finite-dimensional separable metrizable $G$-space whose enveloping space $X_G$ is metrizable. If $\theta$ is nice, then the following statements are equivalent:
 \begin{enumerate}
     \item [(i)] $X$ is locally $G$-contractible;
     \item [(ii)] $X_G$ is a $G$-ANE.
 \end{enumerate}}
\end{teo}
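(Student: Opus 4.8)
The plan is to realize the asserted equivalence as the composition of two equivalences already at our disposal, with one short preliminary verification. Under the standing hypotheses the enveloping space $X_G$ is itself a finite-dimensional separable metrizable $G$-space carrying a \emph{global} action $\mu^\theta$, so that the external characterization of $G$-ANE's applies to it directly. Once this is secured, Theorem \ref{locallyc} will bridge the local $G$-contractibility of $X$ and of $X_G$, while \cite[Proposition 3.2]{JJ} will identify local $G$-contractibility of $X_G$ with its being a $G$-ANE.

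The first step is to confirm that $X_G$ meets the precise hypotheses of \cite[Proposition 3.2]{JJ}. Metrizability of $X_G$ is assumed. For separability I would argue as follows: a compact Lie group $G$ is a compact manifold, hence second countable, and $X$, being separable metrizable, is second countable; therefore $G\times X$ is second countable. Since the quotient map $p^\theta_G\colon G\times X\to X_G$ is continuous, open and surjective, $X_G$ is second countable as well, and being metrizable it is separable. With $X_G$ separable metrizable and $\theta$ nice, Lemma \ref{dimen} yields $\dim X_G=\dim X<\infty$, so $X_G$ is finite-dimensional. This is exactly the point the authors flagged when they remarked that one must relate $\dim X$ and $\dim X_G$ before invoking the characterization.

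With these facts in hand, the equivalence follows by chaining. Since $G$ is compact and $\theta$ is nice, Theorem \ref{locallyc} gives that $X$ is locally $G$-contractible if and only if $X_G$ is locally $G$-contractible. Since $X_G$ is a finite-dimensional metrizable $G$-space over the compact Lie group $G$ with a global action, \cite[Proposition 3.2]{JJ} gives that $X_G$ is locally $G$-contractible if and only if $X_G$ is a $G$-ANE. Composing these two equivalences yields (i) $\iff$ (ii).

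Because the genuinely difficult content has already been absorbed into Theorem \ref{locallyc}, Lemma \ref{dimen}, and the cited characterization, the present argument is essentially bookkeeping: one must only certify that $X_G$ satisfies the hypotheses under which those results apply. I expect the sole point requiring care to be the verification that $X_G$ is separable, since this is what makes Lemma \ref{dimen} available and hence guarantees finite-dimensionality of $X_G$; deriving second countability of $X_G$ from that of $G$ and $X$ via the openness of $p^\theta_G$ is therefore the linchpin of the proof.
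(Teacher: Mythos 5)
Your proposal is correct and follows essentially the same route as the paper: verify separability of $X_G$ via the open quotient map, invoke Lemma \ref{dimen} for finite-dimensionality, and then chain Theorem \ref{locallyc} with \cite[Proposition 3.2]{JJ}. The only difference is that you spell out the second-countability argument for separability in more detail than the paper does.
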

\begin{proof} Notice that $X_G$ is separable  because $X$ and $G$ are separable and the quotient map
 $p^\theta: G\times X\to X_G$ defined in \eqref{qo} is open. For (i) $\Rightarrow$ (ii), it follows from Theorem \ref{locallyc} that $X_G$ is locally $G$-contractible. Since $\dim X_G<\infty$ by Lemma \ref{dimen}, we obtain from \cite[Proposition 3.2]{JJ} that $X_G$ is a $G$-ANE.

    (ii) $\Rightarrow$ (i) The space $X_G$ is locally $G$-contractible by \cite[Proposition 3.2]{JJ}, then the fact that $X$ is locally $G$-contractible follows by  Theorem \ref{locallyc}.
    \end{proof}

The following result is clear but worth highlighting it,  because as we saw in Example \ref{matriz}, $X_G$ is not an AE even though $X$ is.

\begin{pro}\label{ANE}{\rm
	Let $(X,\theta)$ be a metrizable separable $G$-space whose enveloping space $X_G$ is metrizable. If $\theta$ is nice, then $X$ is an ANE if and only if $X_G$ is an ANE. }
\end{pro}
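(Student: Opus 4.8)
The plan is to reduce the statement to two classical facts about absolute neighborhood extensors in the class of metrizable spaces, exploiting that niceness turns $\iota$ into an open embedding whose translates cover $X_G$.

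First I would record the geometric setup. Since $\theta$ is nice, Proposition \ref{Kellen1}(iii) gives that $\iota : X \to X_G$ is an open embedding; hence $\iota(X)$ is an open subset of $X_G$ homeomorphic to $X$. Moreover, from \eqref{iota} we have $G\cdot \iota(X) = X_G$, so that $X_G = \bigcup_{g\in G}\mu_g(\iota(X))$, and because each $\mu_g$ is a self-homeomorphism of $X_G$, every member $\mu_g(\iota(X))$ of this cover is open in $X_G$ and homeomorphic to $X$. Both $X$ and $X_G$ are metrizable by hypothesis, so throughout we remain inside the category in which the ANE machinery applies.

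For the implication ``$X_G$ is an ANE $\Rightarrow X$ is an ANE'' I would use the standard fact that an open subspace of a metrizable ANE is again an ANE (cf. \cite{SH}): given an extension problem $f:A\to \iota(X)$ on a closed subset $A$ of a metrizable space $Y$, regard $f$ as a map into $X_G$ via the inclusion $\iota(X)\hookrightarrow X_G$, extend it over some neighborhood of $A$ using that $X_G$ is an ANE, and then restrict to the open preimage of $\iota(X)$, which is still a neighborhood of $A$ since $f(A)\subseteq \iota(X)$. As $X\cong \iota(X)$, this shows $X$ is an ANE.

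For the converse ``$X$ is an ANE $\Rightarrow X_G$ is an ANE'' I would invoke Hanner's local-to-global theorem: a metrizable space admitting an open cover by ANE subspaces is itself an ANE. If $X$ is an ANE then so is each $\mu_g(\iota(X))$, being homeomorphic to $X$, and these sets form an open cover of $X_G$; since $X_G$ is metrizable, Hanner's theorem yields that $X_G$ is an ANE. The entire content of the argument lies in the correct invocation of these two classical theorems, and there is no genuine obstacle beyond verifying their hypotheses; the crucial ones are niceness, which guarantees that $\iota$ is an open embedding, and the metrizability of $X_G$, without which the local characterization of ANEs would be unavailable.
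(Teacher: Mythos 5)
Your proof is correct and follows the same route as the paper: niceness makes $\iota$ an open embedding, so the converse direction is the classical ``open subspace of an ANE is an ANE'' fact, while the forward direction applies Hanner's local-to-global theorem to the open cover $X_G=\bigcup_{g\in G}\mu_g(\iota(X))$ by copies of $X$. The paper cites exactly these two results from Hu's book (Chapter 2, Proposition 6.1 and Theorem 17.1).
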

\begin{proof} Suppose that $X$ is an ANE. Since $X_G=\bigcup\limits_{g\in G}\mu_g(\iota(X))$, and $\mu_g(\iota(X))$ is an ANE for each $g\in G$, then the result follows by \cite[Chapter 2, Theorem 17.1]{SH}. For the converse, we assume that $X_G$ is an ANE. But by (iii) of Proposition \ref{Kellen1}  the space $X$ is homeomorphic to an open subset of $X_G$, then the assertion follows from \cite[Chapter 2, Proposition 6.1]{SH}.
\end{proof}

\begin{coro}\label{cl}
        Let $G$ be a compact Lie group and $(X,\theta)$ be a metrizable $G$-space whose enveloping space $X_G$ is metrizable. If $\theta$ is nice and free, then $X$ is an ANE if and only if $X_G$ is a $G$-ANE.
\end{coro}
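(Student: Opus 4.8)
The plan is to route the equivalence through the enveloping space regarded as a \emph{global}, \emph{free} $G$-space. First I would check that the enveloping action $\mu^\theta$ on $X_G$ is free; then I would invoke the fixed-point characterization of $G$-ANE's (the theorem expressed through the sets $X[K]$ of \eqref{xh}) to collapse ``$X_G$ is a $G$-ANE'' into the single condition ``$X_G$ is an ANE''; and finally I would transport this condition across the open embedding $\iota$ using Proposition \ref{ANE}. The two substantive inputs are thus the freeness of $\mu^\theta$ and the consequent triviality of every isotropy group in $X_G$.

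For the freeness of $\mu^\theta$, suppose $\mu^\theta_{g'}([g,x])=[g'g,x]=[g,x]$. By the defining relation \eqref{eqgl} this forces $x\in X_{(g'g)^{-1}g}$ together with $\theta_{g^{-1}g'g}(x)=x$. Writing $h=g^{-1}g'g$, so that $X_{(g'g)^{-1}g}=X_{h^{-1}}$, we get $(h,x)\in G*X$ and $\theta(h,x)=x$; freeness of $\theta$ then yields $h=e$ and hence $g'=e$. Consequently $\mu^\theta$ is a free global action of the compact Lie group $G$ on the metrizable space $X_G$, and $G_z=\{e\}$ for every $z\in X_G$.

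Next I would feed this into the characterization theorem asserting that a global metrizable $G$-space of finite structure is a $G$-ANE if and only if $X[K]$ is an ANE for every closed subgroup $K\le G$. Since all isotropy is trivial, $(X_G)[K]=\{z:K\subseteq G_z\}$ equals $X_G$ when $K=\{e\}$ and is empty for every nontrivial closed $K$; moreover a free action carries a single orbit type, which supplies the required finite structure. Because the empty space is vacuously an ANE, the only surviving requirement is that $(X_G)[\{e\}]=X_G$ be an ANE, so $X_G$ is a $G$-ANE if and only if $X_G$ is an ANE. Coupling this with Proposition \ref{ANE}, which equates ``$X_G$ is an ANE'' with ``$X$ is an ANE'', closes the chain and proves the corollary.

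The freeness computation is routine; the main obstacle I anticipate is the bookkeeping surrounding the hypotheses of the characterization theorem---one must confirm that a free action genuinely satisfies the ``finite structure'' assumption and pin down the convention under which $\emptyset$ counts as an ANE, so that the nontrivial subgroups contribute nothing to the criterion. A secondary point is reconciling the separability (and, where invoked, finite-dimensionality) hypotheses of Proposition \ref{ANE} with those stated here; should separability fail to be automatic, it would need to be added to the standing assumptions, after which the argument goes through verbatim.
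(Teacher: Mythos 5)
Your overall architecture (freeness of the enveloping action, reduction of ``$X_G$ is a $G$-ANE'' to ``$X_G$ is an ANE'', then Proposition \ref{ANE}) matches the paper's, and your freeness computation correctly fills in what the paper dismisses as ``not difficult to verify''. The divergence is in the middle step: the paper collapses the $G$-ANE condition to the ANE condition by citing \cite[Theorem 3.2]{AAS}, a statement tailored to \emph{free} metrizable $G$-spaces of a compact Lie group which carries no dimension or separability restrictions, whereas you route through the Smirnov--Jaworowski fixed-point characterization.

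That is where the gap lies. The fixed-point characterization, as stated in the paper, requires the global $G$-space to have a \emph{finite structure}, and your claim that ``a free action carries a single orbit type, which supplies the required finite structure'' is not justified: in the sense of \cite{JJ2} used here, finite structure is not merely finiteness of the set of orbit types; it also involves finite-dimensionality and separability. The paper itself makes this visible: both Theorem \ref{comp} and Theorem \ref{automatic} explicitly assume separability and establish $\dim X_G<\infty$ via Lemma \ref{dimen} before invoking \cite[Example 1.4]{JJ2} to obtain finite structure, and only then apply \cite[Theorem 4.2]{JJ2}. Corollary \ref{cl} assumes neither separability nor finite-dimensionality of $X$, so your route establishes the statement only under extra hypotheses the corollary does not impose --- precisely the obstacle you flagged, and one that cannot be discharged in the stated generality. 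The intermediate equivalence you want (for a free action of a compact Lie group on a metrizable space, $G$-ANE is equivalent to ANE) is in fact true without those restrictions, but the correct tool for it is exactly \cite[Theorem 3.2]{AAS}; substituting that citation for your fixed-point argument repairs the proof and reproduces the paper's. (Your secondary concern about the separability hypothesis in Proposition \ref{ANE} is fair, but it points at the paper's own bookkeeping rather than at a defect specific to your argument.)
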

\begin{proof}  It is not difficult to verify that the enveloping action $\mu$ of $\theta$ is free. Suppose that $X$ is an ANE, then  $X_G$ is an ANE thanks to Proposition \ref{ANE} and the result follows by \cite[Theorem 3.2]{AAS}. For the converse, observe that $X_G$ is an ANE, then $X$ is an ANE by Proposition \ref{ANE}.
\end{proof}



 To prove Theorem \ref{automatic} we need the next lemma.
 
 \begin{lem}\label{finita} The following statement holds.
 \begin{enumerate}
     \item [(i)]  Let $Y$ be a metric space, $n\in \mathbb{N}$ and $Y_1,\dots,Y_n$ be closed subsets of $Y$ such that  $Y=\bigcup\limits_{i=1}^nY_i$. If $\bigcap_{j\in J}Y_j$ is an ANE each subset $J$ of $\{1,2,\dots,n\}$, then $Y$ is an ANE.
     
     \item [(ii)]  Let $(X,\theta)$ be a $G$-space and let $\mu: G\times X_G\rightarrow X_G$ be its enveloping action. If $H$ is a subgroup of $G$, then:
	\begin{center}
		$X_G[H]=\bigcup\limits_{g\in G} \mu_g(\iota(X)[g^{-1}Hg])$.
	\end{center}
 \end{enumerate}
 	
 \end{lem}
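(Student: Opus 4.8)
The lemma has two independent parts, so I would treat them separately. For part (i), the statement is a finite-union criterion for being an ANE, and the natural approach is induction on $n$ combined with the classical pasting theorem for ANE's (the union of two ANE's whose intersection is an ANE is again an ANE, which is the content of \cite[Chapter 2, Theorem 17.1]{SH} or a standard variant thereof). First I would record the base case $n=1$, which is trivial. For the inductive step, I would set $Z = \bigcup_{i=1}^{n-1} Y_i$ and $W = Y_n$, so that $Y = Z \cup W$ with both $Z$ and $W$ closed in $Y$. The hypothesis gives that $W=Y_n$ is an ANE and, by the inductive hypothesis applied to the closed cover $\{Y_1,\dots,Y_{n-1}\}$ of $Z$, that $Z$ is an ANE. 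The main subtlety is to verify the hypotheses for the intersection: $Z \cap W = \bigl(\bigcup_{i=1}^{n-1} Y_i\bigr) \cap Y_n = \bigcup_{i=1}^{n-1}(Y_i \cap Y_n)$, and I would apply the inductive hypothesis a second time to this closed cover of $Z\cap W$, checking that for any $J \subseteq \{1,\dots,n-1\}$ the set $\bigcap_{j\in J}(Y_j\cap Y_n) = \bigcap_{j\in J\cup\{n\}} Y_j$ is an ANE by hypothesis. Then the two-set pasting theorem finishes the step.

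For part (ii), the identity to establish is $X_G[H] = \bigcup_{g\in G}\mu_g\bigl(\iota(X)[g^{-1}Hg]\bigr)$, which I would prove by double inclusion, unwinding the definition \eqref{xh} of the fixed-point set together with the description of the enveloping action $\mu$ in \eqref{action}. For the inclusion $\supseteq$, I would take a point of the form $\mu_g(\iota(w))$ with $w\in X$ and $\iota(w)\in \iota(X)[g^{-1}Hg]$, meaning $g^{-1}Hg \subseteq G_{\iota(w)}$; a direct computation using $G_{\mu_g(z)} = g\,G_z\,g^{-1}$ then gives $H \subseteq g\,G_{\iota(w)}\,g^{-1} = G_{\mu_g(\iota(w))}$, so the point lies in $X_G[H]$. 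For the reverse inclusion $\subseteq$, I would take $z\in X_G[H]$, write $z = \mu_g(\iota(x)) = [g,x]$ using that $G\cdot\iota(X) = X_G$ from \eqref{iota}, and then from $H\subseteq G_z = g\,G_{\iota(x)}\,g^{-1}$ deduce $g^{-1}Hg \subseteq G_{\iota(x)}$, i.e. $\iota(x)\in \iota(X)[g^{-1}Hg]$, placing $z$ in the right-hand union.

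The only genuinely delicate point is the conjugation formula $G_{\mu_g(z)} = g\,G_z\,g^{-1}$ for isotropy groups under the global enveloping action; since $\mu$ is an honest global action of $G$ on $X_G$, this is the standard relation $\operatorname{stab}(g\cdot z) = g\,\operatorname{stab}(z)\,g^{-1}$ and requires no partiality adjustments, so I expect it to go through cleanly. I would flag that both inclusions in part (ii) hinge only on this formula and on $\iota(X)$ being a $\mu$-orbit generator, both of which are already available from \eqref{iota} and \eqref{action}.

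The step I expect to be the main obstacle is the bookkeeping in the inductive step of part (i): one must be careful that the intersection $Z\cap W$ is genuinely covered by the closed sets $\{Y_i\cap Y_n\}_{i<n}$ and that the ANE hypothesis transfers correctly to all their sub-intersections, since a careless reindexing could omit the index $n$ from the relevant subsets $J\cup\{n\}$. Provided the indexing is handled precisely, the argument reduces to two applications of the two-set pasting theorem and nothing deeper is needed.
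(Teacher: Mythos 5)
Your proposal is correct and follows essentially the same route as the paper: part (i) by induction on $n$ via the two-set pasting theorem applied to $Z=\bigcup_{i<n}Y_i$ and $Y_n$, with the intersection handled by a second application of the inductive hypothesis to the closed cover $\{Y_i\cap Y_n\}$ and the identity $\bigcap_{j\in J}(Y_j\cap Y_n)=\bigcap_{j\in J\cup\{n\}}Y_j$; and part (ii) by double inclusion using the conjugation formula $G_{\mu_g(z)}=g\,G_z\,g^{-1}$ for the global enveloping action together with $G\cdot\iota(X)=X_G$. The only divergence is cosmetic (the paper starts the induction at $n=2$ citing \cite[Chapter 2, Proposition 10.1]{SH} for the two-set case, rather than Theorem 17.1).
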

 \begin{proof}
 	(i) We proceed by induction on $n$. The case $n=2$ is treated in \cite[Chapter 2, Proposition 10.1]{SH}. Suppose that the assumption holds for  $n\geq 2.$ Write $ Y=\bigcup\limits_{i=1}^{n+1}Y_i$  as a union of closed subsets, such that  $\bigcap_{j\in J}Y_j$ is an ANE for each $J\subseteq \{1,2,\dots,n+1\}$. By the inductive hypothesis, $Z=\bigcup_{i=1}^nY_i$ is an ANE. Notice that $Z\cap Y_{n+1}=\bigcup_{i=1}^n (Y_i\cap Y_{n+1})$ is an ANE. Indeed, take $J\subseteq\{1,2,\dots,n\}$. Then
 	\begin{center}
 		$\bigcap\limits_{j\in J} (Y_j\cap Y_{n+1})=\bigcap\limits_{j\in J\cup\{n+1\}}Y_j$   \,\, is an ANE,
 	\end{center}
 and by the inductive hypothesis $Z\cap Y_{n+1}$ is an ANE. Thus $Y=Z\cup Y_{n+1}$ is an ANE  thanks to \cite[Chapter 2, Proposition 10.1]{SH}.\vspace{0.2cm}

 \noindent (ii)
	Take $z\in X_G[H]$ and $g\in G$, $x\in X$ with $z=[g,x]$. Since $ G_z=G_{\mu_g(\iota(x))}=gG_{\iota(x)}g^{-1}$ and $H\subseteq G_z$, then  $g^{-1}Hg\subseteq G_{\iota(x)}$ and $\iota(x)\in \iota(X)[g^{-1}Hg]$. This show that $z\in \mu_g(\iota(X)[g^{-1}Hg])$.
	Conversely, take  $g\in G$, $x\in X$ such that $\iota(x)\in i(X)[g^{-1}Hg]$ and let $z=\mu_g(\iota(x))$. Then $g^{-1}Hg\subseteq G_{\iota(x)}$ and $H\subseteq gG_{\iota(x)}g^{-1}=G_{\mu_g(x)}=G_z$. Thus $z\in X_G[H]$.\end{proof}


\begin{teo}\label{automatic}{\rm
	Let $G$ be a finite group and $(X,\theta)$ be a  finitely-dimensional separable metric $G$-space such that $\theta$ is nice and $G*X$ is closed. If  $X[H]$ is an ANE for each subgroup $H$ of $G$ then $X_G$ is a $G$-ANE.}
\end{teo}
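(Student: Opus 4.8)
The plan is to apply the $G$-ANE characterization theorem (\cite[Theorem 1]{YS}, \cite[Theorem 4.2]{JJ2}, \cite[Theorem 1.6]{AAMV}): since a finite group is a compact Lie group, it suffices to verify that $X_G$ is a metrizable global $G$-space with a finite structure and that $X_G[H]$ is an ANE for every subgroup $H$ of $G$ (all of which are automatically closed). Before the core argument I would record that $X_G$ meets the standing hypotheses: it is metrizable by Theorem \ref{mett}(i) applied with $K=G$ (using that $G*X$ is closed), it is separable because $p^\theta$ is open and $G\times X$ is separable, and $\dim X_G=\dim X<\infty$ by Lemma \ref{dimen}; since $G$ is finite this gives a finite structure, and its enveloping action $\mu$ is global.

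Next I would build a well-behaved finite closed cover of each $X_G[H]$. By Lemma \ref{finita}(ii), $X_G[H]=\bigcup_{g\in G}\mu_g(\iota(X)[g\m Hg])$, which is finite since $G$ is finite. The technical fulcrum is that $\iota(X)$ is clopen in $X_G$: it is open because $\theta$ is nice (Proposition \ref{Kellen1}(iii)) and closed because $G*X$ is closed (Proposition \ref{iota2}(i) with $K=G$). Hence each $\mu_g(\iota(X))$ is clopen in $X_G$, and writing $Y_g:=\mu_g(\iota(X))\cap X_G[H]$ one checks directly that $Y_g=\mu_g(\iota(X)[g\m Hg])$. Thus each $Y_g$ is clopen, in particular closed, in $X_G[H]$, providing the closed sets demanded by Lemma \ref{finita}(i).

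The heart of the proof is showing that every finite intersection $\bigcap_{i\in J}Y_{g_i}$ is an ANE. I would fix $i_0\in J$ and translate by the homeomorphism $\mu_{g_{i_0}}\m$, which carries $X_G[H]$ onto $X_G[g_{i_0}\m H g_{i_0}]$. Using the standard identity $\iota(X)\cap\mu_h(\iota(X))=\iota(X_h)$ for the restriction of $\mu$ to $\iota(X)$, together with the $G$-isovariance of $\iota$ (Proposition \ref{iota2} with $K=G$, whence $G_x=G_{\iota(x)}$ and therefore $\iota(X)[K]=\iota(X[K])$), one identifies $\mu_{g_{i_0}}\m\big(\bigcap_{i\in J}Y_{g_i}\big)$ with $\iota\big(\bigcap_{i\in J}X_{g_{i_0}\m g_i}\cap X[g_{i_0}\m H g_{i_0}]\big)$. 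The set $\bigcap_{i\in J}X_{g_{i_0}\m g_i}$ is open in $X$ as a finite intersection of open domains, so this is an open subset of the ANE $X[g_{i_0}\m H g_{i_0}]$, and an open subset of an ANE is an ANE (\cite[Chapter 2, Proposition 6.1]{SH}). Hence $\bigcap_{i\in J}Y_{g_i}$ is an ANE.

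Finally, Lemma \ref{finita}(i) applied to the finite closed cover $\{Y_g\}_{g\in G}$ of the metric space $X_G[H]$ yields that $X_G[H]$ is an ANE; letting $H$ range over the subgroups of $G$ and invoking the characterization theorem gives that $X_G$ is a $G$-ANE. I expect the main obstacle to be precisely the intersection step: handling the bookkeeping of the conjugate isotropy subgroups $g_{i_0}\m H g_{i_0}$ correctly and recognizing, after the translation $\mu_{g_{i_0}}\m$, that the intersection is exactly an open subset of a fixed-point set $X[g_{i_0}\m H g_{i_0}]$, so that the hypothesis "$X[H]$ is an ANE for each subgroup $H$" combines with the "open subset of an ANE is an ANE" principle to close the argument.
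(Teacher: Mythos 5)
Your proposal is correct and follows the same skeleton as the paper's proof: establish that $X_G$ is separable, metrizable and finite-dimensional, decompose $X_G[H]$ as the finite closed union $\bigcup_{g\in G}\mu_g(\iota(X)[g^{-1}Hg])$ via Lemma \ref{finita}(ii), check that all finite intersections of the pieces are ANEs, apply Lemma \ref{finita}(i), and conclude with the Smirnov--Jaworowski characterization (the paper cites \cite[Example 1.4]{JJ2} for the finite-structure hypothesis where you simply assert it, and cites \cite{MPV1} for metrizability where you use Theorem \ref{mett}(i); both are fine). The one place you genuinely diverge is the intersection step, and there your treatment is actually more explicit than the paper's. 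The paper handles intersections through the identity $\bigcap_{i=1}^m\iota(X)[K_i]=\iota(X)\bigl[\bigl\langle\bigcup_{i=1}^mK_i\bigr\rangle\bigr]$, which only concerns fixed-point sets inside the single slice $\iota(X)$, whereas the sets actually being intersected are the translates $\mu_g(\iota(X)[g^{-1}Hg])$; the required reduction (translating by $\mu_{g_{i_0}}^{-1}$ and tracking the domains arising from $\iota(X)\cap\mu_h(\iota(X))=\iota(X_h)$) is left implicit there. You carry out exactly that reduction, identifying the translated intersection with $\iota\bigl(\bigcap_{i\in J}X_{g_{i_0}^{-1}g_i}\cap X[g_{i_0}^{-1}Hg_{i_0}]\bigr)$, an open subset of a single fixed-point set, so that the hypothesis together with \cite[Chapter 2, Proposition 6.1]{SH} closes the argument without ever invoking the generated-subgroup identity. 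Both routes are valid; yours makes the bookkeeping transparent and is slightly more economical in what it demands of the fixed-point sets, while the paper's version is shorter on the page.
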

\begin{proof}
	
	Since $G$ and $X$ are separable, then $X_G$ is a separable space. Moreover,  the fact that $G*X$ is closed implies that $X_G$ is metrizable thanks to \cite[Theorem 3.2]{MPV1}.  Now,
let $H$ be a subgroup of $G$ and observe that $X_G[H]$ is an ANE. Indeed, since  $\iota(X[K])=\iota(X)[K]$ for each subgroup $K$ of $G$ and the map $\iota$ is an embedding we have that $\iota(X)[K]$ is an ANE. In particular, $\iota(X)[g^{-1}Hg]$ is an ANE for each $g\in G$. Thus follows by (ii) of  Lemma \ref{finita} it follows that $X_G[H]$ is a finite union of ANE closed subspaces. In addition, observe that $$\bigcap_{i=1}^m\iota(X)[K_i]=\iota(X)\left[\left\langle \bigcup\limits_{i=1}^mK_i\right\rangle\right]$$
	 is an ANE for every subgroups $K_1,\dots,K_m$ of $G$, where $\left\langle \bigcup\limits_{i=1}^mK_i\right\rangle$ denotes the subgroup generated by $\bigcup\limits_{i=1}^mK_i$. Since $G$ is finite, then  (i) in Lemma  \ref{finita} implies that $X_G[H]$ is an ANE. Moreover, it follows by Lemma \ref{dimen}  that $\rm{dim}\ X_G<\infty$, then using \cite[Example 1.4]{JJ2} we can apply \cite[Theorem 4.2]{JJ2} to conclude that $X_G$ is a $G$-ANE, as desired.
 \end{proof}
	

\begin{ejem}
	Let $n\in \mathbb{Z}^+$ and $\{U_k: k\in\{1,2,\cdots,n\}\}$ be  a collection of disjoint nonempty open subsets of $\mathbb{R}$. Observe that $X=\bigcup\limits_{i=1}^n U_i$ is a ANE. Consider $\mathbb{Z}_2=\{0,1\}$ acting partially on $X$ under $\theta: \mathbb{Z}_2*X\rightarrow X$, where $X_1=U_1$, $X_0=X$, $\theta_0={\rm id}_X$ and $\theta_1= {\rm id}_{U_1}$. It is clear that the hypothesis of Proposition \ref{automatic} is satisfied. Then $X_{\mathbb{Z}_2}$ is a $\mathbb{Z}_2$-ANE.

\end{ejem}

	\end{document}